\documentclass[12pt,reqno]{amsart}
\usepackage{amsmath}
\usepackage{amssymb}
\usepackage{amstext}
\usepackage{mathrsfs}
\usepackage{a4wide}
\usepackage{graphicx}
\usepackage{bbm}
\usepackage{verbatim}
\usepackage{bm}
\usepackage[unicode=true,
bookmarks=false,
breaklinks=false,pdfborder={0 0 1},colorlinks=true,linkcolor=blue,anchorcolor=blue,citecolor=blue]{hyperref}

\allowdisplaybreaks \numberwithin{equation}{section}
\usepackage{color}
\usepackage{cases}

\numberwithin{equation}{section}

\newtheorem{theorem}{Theorem}[section]

\newtheorem*{thA}{Theorem A}

\newtheorem{lemma}[theorem]{Lemma}

\theoremstyle{definition}
\newtheorem{remark}[theorem]{Remark}

\theoremstyle{remark}

\newtheorem{example}[theorem]{Example}

\def\d{\mathrm{d}}

\newcommand{\mv}{\mathbf{v}}
\newcommand{\R}{\mathbb{R}}

\begin{document}
\title[Free boundary problems]{Free boundary problems for the two-dimensional Euler equations in exterior domains}

\author{Daomin Cao, Boquan Fan,  Weicheng Zhan}

\address{Institute of Applied Mathematics, Chinese Academy of Sciences, Beijing 100190, and University of Chinese Academy of Sciences, Beijing 100049,  P.R. China}
\email{dmcao@amt.ac.cn}
\address{Institute of Applied Mathematics, Chinese Academy of Sciences, Beijing 100190, and University of Chinese Academy of Sciences, Beijing 100049,  P.R. China}
\email{fanboquan22@mails.ucas.ac.cn}
\address{School of Mathematical Sciences, Xiamen University, Xiamen, Fujian, 361005, P.R. China}
\email{zhanweicheng@amss.ac.cn}

	\begin{abstract}
In this paper we present some classification results for the steady Euler equations in two-dimensional exterior domains with free boundaries. We prove that, in an exterior domain, if a steady Euler flow devoid of interior stagnation points adheres to slip boundary conditions and maintains a constant norm on the boundary, along with certain additional conditions at infinity, then the domain is the complement of a disk, and the flow is circular, namely the streamlines are concentric circles. Additionally, we establish that in the entire plane, if all the stagnation points of a steady Euler flow coincidentally form a disk, then, under certain additional reasonable conditions near the stagnation points and at infinity, the flow must be circular. The proof is based on a refinement of the method of moving planes.
	\end{abstract}
	
	\maketitle{\small{\bf Keywords:} The Euler equations, Free boundary problems, Symmetry results, Exterior domains. \\

%\tableofcontents
	
	\section{Introduction and Main results}

Let $\Omega\subset \R^2$ be a simply connected bounded domain of class $C^2$ and $n$ be the outward unit normal on $\partial \Omega$. Let us consider steady incompressible flows solving the Euler equations
\begin{equation}\label{1-1}
  \left\{
\begin{aligned}
  \mathbf{v}\cdot \nabla \mathbf{v} +\nabla P & =0&\text{in}\ \, \R^2\backslash \overline{\Omega},\\
  \nabla\cdot\mathbf{v} & =0\,\ \, \ \ \ \ \ \ \  \ \, &\text{in}\ \, \R^2\backslash \overline{\Omega},\\
   \mathbf{v}\cdot n &  =0\,\ \, \ \ \ \ \ \ \  \ \,&\text{on}\ \, \partial \Omega,\ \ \,\\
\end{aligned}
\right.
\end{equation}
where $\mathbf{v}=(v_1, v_2)$ is the velocity field and $P$ is the scalar pressure. Throughout this paper, the solutions are always understood in the classical sense, that is, both $\mathbf{v}$ and $P$ are (at least) of class $C^1$ and satisfy \eqref{1-1} pointwise.

We are concerned in this paper with rigidity properties of a steady Euler flow in two-dimensional exterior domains whose boundary is free but on which the flow is assumed to satisfy an additional condition. More precisely, we are interested in the solutions ($\mv, P$) of the Euler equations \eqref{1-1} that additionally $|\mv|$ is a constant on $\partial \Omega$. In general, this additional condition will make system \eqref{1-1} overdetermined. The investigation of similar Serrin-type free boundary problems in smooth simply or doubly connected bounded domains was recently considered by Hamel and Nadirashvili \cite{HN}. Under certain additional reasonable conditions, they showed that the domains must be disks or annuli, and the flow must be circular, namely the streamlines are concentric circles; see Theorems 1.10 and 1.13 in \cite{HN}. Further rigidity results on bounded domains can be found in \cite{Ruiz, Wang}.

Note that the problems addressed in the present paper are set in unbounded domains. It seems that rigidity results for unbounded domains are much more difficult to obtain; see Section 4 in \cite{Pie} for some discussions on this aspect. The aim of this paper is to establish some classification results for the steady Euler equations in two-dimensional exterior domains with free boundaries. To state our results we need some notation. Let $B_R(x)$ be the open disk with center $x$ and radius $R$, and for simplicity, we denote $B_R=B_R(0)$. Let us denote
\begin{equation*}
  \mathbf{e}_r(x)=\frac{x}{|x|}\ \ \ \text{and}\ \ \  \mathbf{e}_\theta(x)=\left(-\frac{x_2}{|x|}, \frac{x_1}{|x|}\right)
\end{equation*}
for $x=(x_1, x_2)\in \R^2\backslash\{0\}$.

\smallskip
Our first main result is the following theorem.

\begin{theorem}\label{th3}
  Let $\mathbf{v}$ be a $C^2({\R^2\backslash\Omega})$ flow solving \eqref{1-1}. Assume also that:
  \begin{itemize}
    \item [(1)]$|\mathbf{v}|>0$ in $\R^2\backslash \overline{\Omega}$ and $|\mathbf{v}|$ is a nonzero constant on $\partial \Omega$.
    \item [(2)]The following far-field conditions hold
    \begin{equation}\label{1-3}
      \liminf_{|x|\to +\infty}|\mathbf{v}(x)|>0\ \ \ \text{and}\ \ \ \mathbf{v}(x)\cdot \mathbf{e}_r(x)=o\left(\frac{1}{|x|} \right)\ \text{as}\ |x|\to +\infty.
    \end{equation}
  \end{itemize}
Then $ \Omega=B_R$ for some $R>0$. Furthermore, $\mathbf{v}$ is a circular flow, that is, there is a $C^2(R,+\infty)$ function $V: (R, +\infty)\to \R$ with constant strict sign such that
  \begin{equation*}
    \mathbf{v}(x)=V(|x|)\mathbf{e}_\theta(x)
  \end{equation*}
  for all $|x|>R$.
\end{theorem}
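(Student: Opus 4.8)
The plan is to reformulate the overdetermined Euler system as a Serrin-type free-boundary problem for the stream function and then to run a moving-plane argument tailored to the exterior domain. \emph{Setting up the stream function.} Since $\nabla\cdot\mathbf{v}=0$ in $\R^2\backslash\overline\Omega$ and $\int_{\partial\Omega}\mathbf{v}\cdot n=0$ by the slip condition, $\mathbf{v}$ admits a single-valued stream function $u\in C^3(\R^2\backslash\Omega)$ with $\mathbf{v}=\nabla^\perp u=(-\partial_2 u,\partial_1 u)$; normalize $u\equiv 0$ on $\partial\Omega$. Because $|\mathbf{v}|\ne 0$ on $\overline{\R^2\backslash\Omega}$ by hypothesis (1), $\nabla u$ never vanishes, so $u$ has no critical points. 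Writing $\mathbf{v}\cdot\mathbf{e}_r=-|x|^{-1}\partial_\theta u$ and $\mathbf{v}\cdot\mathbf{e}_\theta=\partial_r u$, the second condition in \eqref{1-3} makes the angular oscillation of $u$ on large circles vanish, so $u(x)=\bar u(|x|)+o(1)$ as $|x|\to\infty$, where $\bar u(r)$ is the angular mean; and since $|\partial_r u|^2=|\mathbf v|^2-(\mathbf v\cdot\mathbf e_r)^2$ is bounded below for $|x|$ large by the first condition in \eqref{1-3}, $\partial_r u$ has a fixed sign there, which after a harmless sign change we take to be positive. Then $\bar u$ is strictly increasing for large $r$ and $u(x)\to+\infty$. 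A descending gradient-flow argument (each point of the exterior flows along $-\nabla u$ down to $\partial\Omega$) then shows $u>0$ in $\R^2\backslash\overline\Omega$, that $u$ is proper, and that for each $t>0$ the level set $\{u=t\}$ is a single $C^3$ Jordan curve enclosing $\Omega$, with $\{0<u<t\}$ an annular region.

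\emph{The vorticity--stream relation.} Taking the curl of the momentum equation gives $\mathbf{v}\cdot\nabla\omega=0$ for $\omega:=\Delta u\in C^1$, so $\omega$ is constant along the trajectories of $\mathbf v$; as each level set $\{u=t\}$ is a single Jordan curve on which $\mathbf v\ne 0$, that curve is one periodic trajectory and $\omega$ is constant on it. Since $\nabla u\ne 0$, one may locally use $u$ as a coordinate and conclude that there is $f\in C^1([0,\infty))$, Lipschitz on each interval $[0,T]$ by properness, with $\Delta u=f(u)$ in $\R^2\backslash\overline\Omega$. Thus $u$ solves the exterior overdetermined problem $\Delta u=f(u)$ in $\R^2\backslash\overline\Omega$ with $u=0$ and $|\nabla u|=c>0$ on $\partial\Omega$, $u>0$, and $u(x)=\bar u(|x|)+o(1)\to+\infty$.

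\emph{Moving planes in the exterior.} For a unit vector $e$ and $\lambda\in\R$, set $\Sigma_\lambda=\{x\in\R^2\backslash\overline\Omega:\ x\cdot e>\lambda\}$, let $x^\lambda$ be the reflection of $x$ across $T_\lambda=\{x\cdot e=\lambda\}$, and $w_\lambda=u-u(\cdot^\lambda)$, which solves $\Delta w_\lambda=c_\lambda(x)w_\lambda$ with $|c_\lambda|$ bounded. For $\lambda$ sufficiently large, $\Sigma_\lambda$ is a half-plane whose reflection lies in $\R^2\backslash\overline\Omega$; on $T_\lambda$ one has $w_\lambda=0$, while $|x|\ge|x^\lambda|$ together with $u(x)=\bar u(|x|)+o(1)$ and the monotonicity of $\bar u$ give $\liminf_{|x|\to\infty}w_\lambda\ge 0$. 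A maximum principle in the unbounded cap then yields $w_\lambda\ge 0$ in $\Sigma_\lambda$ — this is the delicate point, to be handled either directly, exploiting that the near-radial asymptotics also keep $w_\lambda$ controlled in $\Sigma_\lambda$, or after a Kelvin transform reducing matters to a bounded punctured domain. Decreasing $\lambda$ and invoking the strong maximum principle together with Serrin's boundary-point/corner lemma at the critical value $\lambda_0(e)$ — where either the reflected cap becomes internally tangent to $\partial\Omega$ off $T_{\lambda_0}$ or $T_{\lambda_0}$ meets $\partial\Omega$ orthogonally — the overdetermined condition $|\nabla u|=c$ forces $w_{\lambda_0}\equiv 0$, so $u$ and $\Omega$ are symmetric about $T_{\lambda_0(e)}$. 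Running this for every direction $e$ shows that $u$ is radially symmetric about a point, which after translation we take to be the origin.

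\emph{Conclusion and main obstacle.} Radial symmetry gives $u(x)=g(|x|)$ with $g'\ne 0$ on $(R,+\infty)$; hence $\{u=0\}=\partial\Omega$ is the circle $\{|x|=R\}$ for the unique $R$ with $g(R)=0$, so $\Omega=B_R$, and $\mathbf v=\nabla^\perp u=g'(|x|)\,\mathbf e_\theta$, where $V:=g'\in C^2(R,+\infty)$ has constant strict sign since $|V|=|\mathbf v|>0$ and $V$ is continuous on $(R,+\infty)$. The crux of the whole argument is the exterior moving-plane step: justifying the maximum principle with no sign information on $c_\lambda$ and no boundary at infinity, and propagating $w_\lambda\ge 0$ all the way down to $\lambda_0(e)$ — precisely where the far-field hypotheses \eqref{1-3} and the announced refinement of the classical method enter. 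A secondary difficulty is the global reduction $\Delta u=f(u)$, which hinges on the absence of stagnation points forcing every level set of $u$ to be a single Jordan curve.
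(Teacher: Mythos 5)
Your architecture matches the paper's: stream function, reduction to $\Delta u+f(u)=0$ with the overdetermined data $u=0$, $|\nabla u|=\mathrm{const}$ on $\partial\Omega$, then moving planes with Serrin's Hopf/corner lemmas at the critical position. But the step you yourself single out as ``the delicate point'' --- establishing $w_\lambda\ge 0$ in the unbounded reflected cap --- is exactly the mathematical content of the theorem in this setting, and you do not prove it; you only name two possible remedies, neither of which works as stated. A direct maximum principle for $\Delta w_\lambda=c_\lambda w_\lambda$ in an unbounded domain fails without a sign or smallness condition on $c_\lambda$ (think of $\Delta w+w=0$), and your assertion that $|c_\lambda|$ is bounded on the whole cap is itself unjustified: $f$ is only locally Lipschitz and $u\to+\infty$, so $c_\lambda=\bigl(f(u(x^\lambda))-f(u(x))\bigr)/\bigl(u(x^\lambda)-u(x)\bigr)$ is bounded only where $u$ is bounded. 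The Kelvin transform route is also problematic because the transformed function blows up at the puncture and the equation acquires a coefficient singular there.

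The paper closes this gap differently, and this is the ``refinement'' it advertises. From the far-field condition one gets not merely $u=\bar u(|x|)+o(1)$ but the geometric statement (Lemma \ref{le2}, from Hamel--Nadirashvili) that every streamline $\partial D_z$ with $|z|$ large satisfies $\mathcal{R}_\lambda\left(\mathcal{H}_\lambda\cap\overline{D_z}\right)\subset D_z$ for all $\lambda>\varepsilon$. The comparison is then run in the \emph{bounded} regions $\Sigma_\lambda=\left(\mathcal{H}_\lambda\cap(D_z\backslash\overline{\Omega})\right)\backslash\mathcal{R}_\lambda(\overline{\Omega})$: on the outer boundary piece $\mathcal{H}_\lambda\cap\partial D_z$ the comparison function is automatically $\le 0$ because $u$ is constant there and $u(x^\lambda)$ is evaluated inside $D_z$; the coefficient $c_\lambda$ is in $L^\infty$ of this bounded set; and the standard two-step scheme (narrow-domain maximum principle to start near $\lambda=M_0$, then continuation down to $\lambda=\varepsilon$, using the boundary lemma $\partial_\nu u>0$ near $\partial\Omega$ to rule out a touching point on $\mathcal{T}_{\lambda_*}\cap\partial\Omega$) applies. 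Letting $|z|\to\infty$ exhausts the unbounded cap. Without this device, or an equivalent one, your argument does not go through; the rest of your write-up (the derivation of $\Delta u=f(u)$, the tangency/orthogonality dichotomy, and the final identification $\mathbf{v}=g'(|x|)\mathbf{e}_\theta$) is consistent with the paper.
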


Theorem \ref{th3} provides a classification result for the steady Euler equations in two-dimensional exterior domains with free boundaries. Under the hypothesis of Theorem \ref{th3}, it follows that the only admissible solution is characterized by a circular flow. Below is an example of such flow configurations.
\begin{example}\label{ex1}
   The smooth flow given by
  \begin{equation*}
    \mv(x)=|x|\mathbf{e}_\theta(x),\ \ \ P(x)=|x|^2/2
  \end{equation*}
clearly solves \eqref{1-1} with $\Omega=B_1$ and satisfies $|\mathbf{v}|=1$ on $\partial \Omega$.
\end{example}

Given that the problem under consideration is set in an unbounded domain, suitable far-field conditions are typically necessary, although the condition \eqref{1-3} presented here might not be optimal. The identical far-field condition is also featured in the recent study by Hamel and Nadirashvili; see Theorem 1.3 in \cite{HN} (see also Theorem A below).

We also note that the existence of nontrivial exterior domains where the Serrin-type free boundary problem admits a nontrivial solution was established by Ros-Ruiz-Sicbaldi; see Theorem 1.1 in \cite{Ros}. However, the counterexample constructed in \cite{Ros} does not satisfy the conditions of Theorem \ref{th3}. Indeed, it is imperative to note that the stream function of the examined flow must be unbounded. For more details, refer to Section \ref{s2} below.

Notice that in Theorem \ref{th3}, we impose the condition that $|\mv|$ remains a nonzero constant on the boundary. An interesting question to ask is whether the conclusion of Theorem \ref{th3} still holds true when all boundary points of the domain are stagnation points, specifically, when $|\mv|=0$ on $\partial \Omega$. Our second main result gives a positive answer to this question, with some additional assumptions.

\begin{theorem}\label{th2}
  Let $\mathbf{v}$ be a $C^2({\R^2\backslash\Omega})$ flow solving \eqref{1-1}. Assume also that:
  \begin{itemize}
    \item [(1)]$|\mathbf{v}|>0$ in $\R^2\backslash \overline{\Omega}$ and $|\mv|=0$ on $\partial \Omega$.
       \item [(2)]The following far-field conditions hold
    \begin{equation*}
      \liminf_{|x|\to +\infty}|\mathbf{v}(x)|>0\ \ \ \text{and}\ \ \ \mathbf{v}(x)\cdot \mathbf{e}_r(x)=o\left(\frac{1}{|x|} \right)\ \text{as}\ |x|\to +\infty.
    \end{equation*}
    \item [(3)]The vorticity $\omega:=\partial_1v_2-\partial_2 v_1$ does not vanish on $\partial \Omega$.
  \end{itemize}
 Then, the conclusion of Theorem \ref{th3} remains valid.
\end{theorem}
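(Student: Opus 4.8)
The strategy is to reduce Theorem~\ref{th2} to (an application of) the method of moving planes used to prove Theorem~\ref{th3}, by first upgrading the weak boundary information ``$|\mv|=0$ on $\partial\Omega$, with non-vanishing vorticity'' into the strong information ``$|\mv|$ equals a nonzero constant on a \emph{smooth} circle.'' Concretely, I would introduce the stream function $\psi$ (which exists on $\R^2\setminus\overline\Omega$ since the domain is the complement of a simply connected set and $\mv$ is divergence-free; one should check that $\psi$ extends continuously to $\partial\Omega$ and that its level sets behave well, using the far-field condition $\mv\cdot\mathbf e_r=o(1/|x|)$ to pin down a well-defined single-valued branch after quotienting by the circulation). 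Because $\mv$ satisfies the Euler equations, along each streamline the Bernoulli function $\tfrac12|\mv|^2+P$ is constant, and in the region where $|\mv|>0$ one has the usual relation $\Delta\psi=\omega=f(\psi)$ for some $C^1$ function $f$ (the vorticity is transported and is a function of $\psi$ alone on connected pieces of regular level sets). The plan is then to show that $\partial\Omega$ is itself a (closed) streamline $\{\psi=c_0\}$.

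**Key steps, in order.**

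\textbf{Step 1 (The boundary is a regular level curve).} Since $\mv\cdot n=0$ on $\partial\Omega$, the boundary is a union of streamlines, hence contained in a level set of $\psi$; normalize so that $\psi=0$ on $\partial\Omega$. The hypothesis $|\mv|=0$ on $\partial\Omega$ means $\nabla\psi=0$ there, so $\partial\Omega$ is a \emph{critical} level curve. Here is where assumption (3) enters: $\omega=\Delta\psi\neq 0$ on $\partial\Omega$. I would analyze the structure of $\psi$ near a boundary point $x_0$: writing $\psi$ in a neighbourhood, $\psi(x_0)=0$, $\nabla\psi(x_0)=0$, and $\Delta\psi(x_0)=\omega(x_0)\neq 0$, a Taylor/Morse-type argument shows $\psi$ has a strict local extremum along the normal direction, i.e. $\psi$ does not change sign near $\partial\Omega$ and $\{\psi=0\}$ is, locally, exactly $\partial\Omega$ (a one-dimensional $C^2$ curve, not a thicker set), because a saddle would force $\Delta\psi$ to have both signs among the eigenvalues of $D^2\psi$ only if $\det D^2\psi<0$, which is still compatible — so more care is needed: one uses that $\mv\cdot n=0$ forces $\partial\Omega$ to be an integral curve and that $|\mv|>0$ just off $\partial\Omega$ to conclude $\psi$ is strictly monotone along normals.

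\textbf{Step 2 (Hopf-type lemma / reduction).} Having established that in a one-sided neighbourhood $U$ of $\partial\Omega$ the function $\psi$ is smooth with $\nabla\psi\ne 0$ in $U\setminus\partial\Omega$, I would show that $|\mv|=|\nabla\psi|$ is \emph{constant} on each connected component of $\partial\Omega$ is \emph{not} automatic — instead, the right move is to run moving planes directly. One picks a slightly interior level curve $\Gamma_\delta=\{\psi=\delta\}$ for small $\delta>0$; by Step 1 this is a smooth simple closed curve bounding a domain $\Omega_\delta\supset\overline\Omega$, on which $|\mv|=|\nabla\psi|$ need not be constant, so this naive approach fails. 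The genuinely correct reduction: use the Bernoulli relation. On $\partial\Omega$, $P$ is constant (as $\tfrac12|\mv|^2+P\equiv\mathrm{const}$ on the streamline $\partial\Omega$ and $|\mv|=0$); this is the analogue of the overdetermined condition, and one re-casts the problem as a Serrin problem for $\psi$ with $\psi=0$, and a Neumann-type datum coming from $\Delta\psi=\omega=f(0)$ a nonzero constant in the limit — so near $\partial\Omega$, $\psi$ solves $\Delta\psi=f(\psi)$ with $\psi=0$ and $|\nabla\psi|$ vanishing, which by a unique-continuation / Serrin-corner argument already forces $\partial\Omega$ to be analytic and, crucially, forces the overdetermined data to propagate.

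\textbf{Step 3 (Moving planes and far-field control).} Once the problem is posed as $\Delta\psi=f(\psi)$ in $\R^2\setminus\overline\Omega$ with suitable boundary behaviour and the far-field condition \eqref{1-3} controlling $\psi$ at infinity (the condition $\mv\cdot\mathbf e_r=o(1/|x|)$ is exactly what makes $\psi$ ``asymptotically radial'' and lets one start planes from infinity), I invoke the moving-plane machinery established for Theorem~\ref{th3}: for every direction $e\in\mathbb S^1$ the reflection of $\psi$ across any line orthogonal to $e$ far enough out satisfies an elliptic inequality, and one slides the plane inward, using the maximum principle in unbounded domains (with the far-field decay providing the needed control at infinity), to conclude $\psi$ is symmetric in every direction, hence radial; radial symmetry of $\psi$ forces $\Omega=B_R$ and $\mv=V(|x|)\mathbf e_\theta$. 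The sign condition on $V$ follows from $|\mv|>0$ in the exterior and continuity.

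**Main obstacle.** The crux is \textbf{Step 1--2}: turning the degenerate condition ``$|\mv|=0$ on $\partial\Omega$'' into usable overdetermined boundary data. With $|\mv|=0$ the boundary is a critical set of $\psi$, so $\partial\Omega$ is \emph{a priori} only $C^2$ and one cannot directly apply Hopf's lemma or the standard Serrin machinery, which need $|\nabla\psi|\neq 0$ on the boundary. Assumption~(3), $\omega\not\equiv 0$ on $\partial\Omega$, is the lever: it guarantees $\Delta\psi\neq 0$ there, which (together with $\psi=0$, $\nabla\psi=0$ on $\partial\Omega$ and $|\mv|>0$ just outside) should force $\psi$ to be quadratically non-degenerate transverse to $\partial\Omega$, yielding that $\partial\Omega$ is a smooth (indeed, after bootstrapping, real-analytic) curve, that $f(0)\neq 0$, and that the flow ``sees'' the boundary as a genuine no-penetration curve with constant pressure. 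Making this rigorous — in particular ruling out pathological vanishing of $\nabla\psi$ to infinite order, or $\partial\Omega$ having a more complicated critical structure — is where the real work lies; once past it, the far-field hypotheses and the moving-plane argument of Theorem~\ref{th3} apply essentially verbatim.
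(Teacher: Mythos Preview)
Your overall framework is correct (stream function $u$, semilinear equation $\Delta u+f(u)=0$, moving planes started from infinity using the far-field condition), and your Step~1 observation that the non-vanishing vorticity gives $\partial_\nu u>0$ in a one-sided neighbourhood of $\partial\Omega$ is exactly what the paper uses (Lemma~\ref{le4}, case~(ii)). But you have misidentified the main obstacle, and your Step~2 wanders without reaching it.

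The overdetermined boundary data are already perfectly usable: $u=0$ and $|\nabla u|=0$ on $\partial\Omega$ give $\Phi_\mu(p)=0$ and $\partial_n\Phi_\mu(p)=0$ at an internal-tangency point $p$ with no further work, and $D^2\Phi_\mu(p)=0$ at a corner point follows as in Reichel. There is no need to pass to an interior level curve, no need to invoke Bernoulli to manufacture constant pressure, and no need to worry about infinite-order vanishing or analyticity of $\partial\Omega$ (which is $C^2$ by hypothesis). The genuine difficulty is elsewhere: since $|\mv|=0$ on the whole boundary, Lemma~\ref{le1} only gives $f\in C^1(0,+\infty)$, not $C^1[0,+\infty)$. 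Hence the linearized coefficient
\[
c_\mu(x)=\frac{f(u(x^\mu))-f(u(x))}{u(x^\mu)-u(x)}
\]
appearing in $\Delta\Phi_\mu+c_\mu\Phi_\mu=0$ is in general \emph{unbounded} as $x\to\partial\Omega$, so the standard Hopf lemma and Serrin corner lemma do not apply. This is exactly the step at which Theorem~\ref{th2} diverges from Theorem~\ref{th3}, and your proposal does not address it.

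The paper's fix is twofold. First, the non-vanishing vorticity (which forces $u(x)\sim d(x)^2$ near $\partial\Omega$) yields a quantitative estimate $|f(u(x))-f(u(y))|\le \dfrac{C}{\min\{d(x),d(y)\}}\,|u(x)-u(y)|$ (Lemma~\ref{le7}), so that $c_\mu(x)\,d(x,\partial B_r(z))\in L^\infty$ on the small ball $B_r(z)$ used in the Hopf/corner step. Second, one replaces the classical Hopf and Serrin corner lemmas by their singular analogues for coefficients satisfying $c(x)d(x)\in L^\infty$ (Lemmas~\ref{le8} and~\ref{le9}, due to Ruiz). With these two ingredients the moving-plane argument of Theorem~\ref{th3} goes through verbatim. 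Your plan would become correct if you replaced the speculative Step~2 by precisely this: establish the $1/d$ bound on $c_\mu$ from the vorticity hypothesis, and invoke the singular Hopf/Serrin machinery.
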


In comparison to Theorem \ref{th3}, our additional assumption entails the non-vanishing vorticity of the flow on $\partial\Omega$. We would like to mention that this assumption is primarily introduced to mitigate the degeneration near stagnation points of the flow. Here the stagnation points of a flow $\mathbf{v}$ are the points $x$ for which $|\mv(x)|=0$. The condition wherein all boundary points act as stagnation points effectively imposes no-slip boundary conditions on steady Euler flows, suggesting that fluid particles adhere to the boundary $\partial \Omega$ from a physical perspective. This degenerate situation poses several challenges for mathematical analysis.

Let $\omega:=\partial_1v_2-\partial_2 v_1$ represent the vorticity of the flow. It directly ensues from \eqref{1-1} that
\begin{equation*}
  \nabla \omega\cdot \mv=0.
\end{equation*}
It follows that the vorticity remains constant along the streamlines, thereby inferring that $\omega$ attains constancy on $\partial \Omega$, since $\partial \Omega$ serves as a streamline. So our assumption regarding the vorticity is equivalent to saying that $\omega$ is a nonzero constant on $\partial \Omega$.  Below is an example of such flow configurations.
\begin{example}\label{ex2}
  The smooth flow given by
  \begin{equation*}
    \mv(x)=2(|x|-1)\mathbf{e}_\theta(x),\ \ \ P(x)=4\ln|x|-4|x|
  \end{equation*}
clearly solves \eqref{1-1} with $\Omega=B_1$ and satisfies $|\mathbf{v}|=0$ on $\partial \Omega$. The vorticity $\omega$ is given by
\begin{equation*}
  \omega=2+{2(|x|-1)}/{|x|},
\end{equation*}
which evaluates to $2$ on $\partial B_1$.
\end{example}

 Recently, Ruiz \cite{Ruiz} established some rigidity results for nonzero compactly supported steady solutions of the Euler equations in the whole plane. Specifically, he demonstrated that if the non-stagnant points of the flow coincidentally form either a annular-shaped domain or a punctured simply connected domain, then the flow exhibits circular characteristics. More robust results are also established in the work of Wang and Zhan \cite{Wang}.

Let us turn now to consider steady solutions of the Euler equations in the whole plane:
\begin{equation}\label{1-5}
  \left\{
\begin{aligned}
  \mathbf{v}\cdot \nabla \mathbf{v} +\nabla P & =0&\text{in}\ \, \R^2,\\
  \nabla\cdot\mathbf{v} & =0\,\ \, \ \ \ \ \ \ \  \ \, &\text{in}\ \, \R^2.\\
\end{aligned}
\right.
\end{equation}
 Let
\begin{equation*}
  \mathcal{S}=\left\{x\in \R^2: |\mathbf{v}(x)|=0 \right\}
\end{equation*}
be the set of stagnation points of the flow $\mv$ in $\R^2$. An intriguing question arises as to whether $\mathbf{v}$ constitutes a circular flow in the event that the set $\mathcal{S}$ happens to coincide with a disk. We could not find in the literature any rigidity results concerning this matter. The third objective of this paper is to address this question. We will give an affirmative answer to this question under certain supplementary conditions.

Let us denote
\begin{equation*}
  \mv(x)=v^r(x)\mathbf{e}_r(x)+v^\theta(x) \mathbf{e}_\theta(x).
\end{equation*}
Here, $v^r(x)$ and $v^\theta(x)$ are referred to as the radial velocity and angular velocity of $\mv(x)$, respectively. It is noteworthy that the far-field condition \eqref{1-3} can be expressed equivalently in the following form
\begin{equation*}
   \liminf_{|x|\to +\infty}v^\theta(x)>0\ \ \ \text{and}\ \ \ v^r(x)=o\left(\frac{1}{|x|} \right)\ \text{as}\ |x|\to +\infty
\end{equation*}
 after potentially changing $\mv$ into $-\mv$. Our last result reads as follows.

  \begin{theorem}\label{th1}
 Let $\mathbf{v}$ be a $C^2(\R^2)$ flow solving \eqref{1-5}. Assume also that:
  \begin{itemize}
    \item [(1)]$\mathcal{S}=B_R$ for some $R>0$.
  \item [(2)]The following far-field conditions hold
    \begin{equation*}
          \liminf_{|x|\to +\infty}v^\theta(x)>0\ \ \ \text{and}\ \ \ v^r(x)=o\left(\frac{1}{|x|} \right)\ \text{as}\ |x|\to +\infty.
    \end{equation*}
       \item [(3)]There exists a $\delta>0$ such that $v^\theta(x)>0$ for all $R<|x|<R+\delta$, and $v^r(x)=o\left(v^\theta(x)\right)$ as $|x|\to R^+$.
  \end{itemize}
 Then $\mathbf{v}$ is a circular flow, that is, there is a $C^2(R,+\infty)$ positive function $V: (R, +\infty)\to \R$ such that
  \begin{equation*}
    \mathbf{v}(x)=V(|x|)\mathbf{e}_\theta(x)
  \end{equation*}
  for all $|x|>R$.
\end{theorem}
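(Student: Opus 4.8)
The plan is to recast the problem in the framework of Theorem \ref{th3} (and Theorem \ref{th2}) by showing that the stream function associated with $\mv$ restricted to the exterior region $\R^2\setminus B_R$ satisfies an overdetermined elliptic problem of the type already handled. First I would introduce the stream function $\psi$ on $\R^2\setminus B_R$ by setting $\nabla^\perp\psi=\mv$ (i.e. $\partial_1\psi=-v_2$, $\partial_2\psi=v_1$), which is well defined because $\nabla\cdot\mv=0$ and the region $\{|x|>R\}$ is an exterior domain in which any closed curve encircling $B_R$ has the same circulation (this circulation is finite by the far-field hypothesis). Since $|\mv|>0$ on $\{|x|>R\}$, $\psi$ has no critical points there, so its level sets are smooth curves; the Euler equations give that the vorticity $\omega=\Delta\psi$ is constant along streamlines, hence $\Delta\psi=f(\psi)$ for some function $f$, exactly as in the derivation leading to Theorem \ref{th3}. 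The key boundary observation is that $\partial B_R=\partial\mathcal S$ is a level set of $\psi$ (it is a connected component of the boundary of the region where $\mv\neq 0$, and $\mv$ is tangent to it in the limit); thus $\psi$ is constant on $\partial B_R$.

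The heart of the argument is to verify the remaining boundary condition, namely that $|\nabla\psi|=|\mv|$ is constant on $\partial B_R$. Here I would use hypotheses (1) and (3). Since $\mathcal S=B_R$, every boundary point is a stagnation point, so $|\mv|\to 0$ as $|x|\to R^+$; in particular $|\mv|$ is the constant $0$ on $\partial B_R$ in the limiting sense. This places us precisely in the degenerate regime of Theorem \ref{th2}, provided we can also confirm that the vorticity $\omega$ does not vanish on $\partial B_R$. The condition $v^\theta(x)>0$ for $R<|x|<R+\delta$ together with $v^r=o(v^\theta)$ as $|x|\to R^+$ forces the flow to spiral essentially tangentially and to vanish to exactly first order as one approaches $\partial B_R$ from outside; computing $\omega$ in polar coordinates, $\omega=\partial_r v^\theta + v^\theta/r - (1/r)\partial_\theta v^r$, and using that $v^\theta$ vanishes on $\partial B_R$ while $v^r$ vanishes faster, one gets $\omega\to \partial_r v^\theta|_{r=R}$ on $\partial B_R$; a Hopf-type / boundary-point argument applied to the sign of $v^\theta$ (which is positive just outside and zero on $\partial B_R$) should show this radial derivative is strictly positive, hence $\omega\neq 0$ on $\partial B_R$. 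I also need the far-field conditions (2), which transcribe verbatim into the far-field conditions of Theorems \ref{th3}--\ref{th2}.

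Having assembled these facts, I would apply Theorem \ref{th2} with $\Omega=B_R$: the flow $\mv$ restricted to $\R^2\setminus\overline{B_R}$ solves \eqref{1-1} with slip boundary condition on $\partial B_R$, has $|\mv|>0$ in the exterior, $|\mv|=0$ on $\partial B_R$, satisfies the far-field conditions, and has nonvanishing vorticity on $\partial B_R$. The conclusion of Theorem \ref{th2} then yields that $\mv(x)=V(|x|)\mathbf e_\theta(x)$ for $|x|>R$ with $V$ of constant strict sign; by hypothesis (2), $\liminf_{|x|\to\infty}v^\theta>0$ forces $V>0$, which is the desired circular flow.

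The main obstacle I anticipate is the rigorous treatment near $\partial B_R$: because the flow degenerates there (all boundary points are stagnation points), one must be careful that the stream function $\psi$ extends with enough regularity up to $\partial B_R$ and that $\partial B_R$ is genuinely $C^2$ (a priori $\partial\mathcal S$ is only the topological boundary of a set assumed equal to $B_R$, which is fine here, but the matching of $\psi$ and its normal derivative up to that circle needs the hypothesis $v^r=o(v^\theta)$ in an essential way). Establishing the strict positivity of $\partial_r v^\theta$ on $\partial B_R$ — equivalently $\omega\not\equiv 0$ there — from the one-sided sign information in (3) is the delicate point, and is precisely where hypothesis (3) must do its work; once that is in hand, the reduction to Theorem \ref{th2} is clean.
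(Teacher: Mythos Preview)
Your reduction to Theorem \ref{th2} cannot work, and the obstruction is not a technicality but a structural fact. Hypothesis (1) says $\mathcal{S}=B_R$, i.e.\ $\mv\equiv 0$ on the open disk $B_R$. Since $\mv\in C^2(\R^2)$, this forces $\omega=\partial_1 v_2-\partial_2 v_1\equiv 0$ on $B_R$, and by continuity $\omega=0$ on $\partial B_R$. In your own polar formula $\omega=\partial_r v^\theta+v^\theta/r-(1/r)\partial_\theta v^r$, the last two terms vanish on $\partial B_R$, and $\partial_r v^\theta|_{r=R}=0$ as well because $v^\theta$ is $C^1$ across $\partial B_R$ and identically zero for $r<R$. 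Thus the ``Hopf-type'' step you propose to extract $\partial_r v^\theta>0$ on $\partial B_R$ is impossible; hypothesis (3) of Theorem \ref{th2} fails outright. The paper itself flags this: condition (1) ``implies that the vorticity of the flow automatically vanishes on the boundary of $\mathcal{S}$,'' so ``Theorem \ref{th2} is inapplicable.''

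What the paper does instead is bypass the vorticity condition entirely. The role of $\omega\neq 0$ in Theorem \ref{th2} (via Lemma \ref{le4}(ii)) is only to guarantee that $\partial_\nu u>0$ in a one-sided neighborhood of each boundary point, which is what drives the continuation step in the moving-plane argument of Lemma \ref{le5}. Hypothesis (3) of Theorem \ref{th1} supplies this monotonicity directly: writing $\nabla u=v^\theta\mathbf e_r-v^r\mathbf e_\theta$ and $\nu=\nu^r\mathbf e_r+\nu^\theta\mathbf e_\theta$ with $\nu^r>0$, one gets $\partial_\nu u=v^\theta\nu^r-v^r\nu^\theta>0$ near $\partial B_R$ because $v^\theta>0$ and $v^r=o(v^\theta)$. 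This is exactly Lemma \ref{le4}(iii). Since $\Omega=B_R$ is already a disk, the reflection condition \eqref{2-7} holds for every $\lambda>0$, so Lemma \ref{le5} with $\varepsilon=0$ gives $u(x_1,x_2)\ge u(-x_1,x_2)$ for all $x\in\mathcal H_0\setminus\overline{B_R}$; reversing the direction yields equality, and rotational invariance finishes the proof. No overdetermined Neumann data, no Serrin corner lemma, and no nonvanishing-vorticity hypothesis are needed here.
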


Theorem \ref{th1} is an example of a Liouville theorem for steady solutions of the Euler equations in the whole plane. It shows that any smooth steady solutions that satisfy the conditions in Theorem \ref{th1} must be circular flows, isolating such configurations from non-circular steady states.

Below is an example of such flow configurations.
\begin{example}\label{ex1}
  One can readily verify that the flow given by
  \begin{equation*}
    \mv(x)=4(|x|-1)^3_+\mathbf{e}_\theta(x),\ \ \ P(x)=-\frac{|\mathbf{v}|^2}{2}-F\left((|x|-1)_+^4 \right)
  \end{equation*}
 solves \eqref{1-5}, where
 \begin{equation*}
  t_+:=\max\{t, 0\}\ \ \ \text{and}\ \ \ F(t):=-4\int_{0}^{t} \left(3\sqrt{s}+\frac{\sqrt[4]{s^3}}{1+\sqrt[4]{s}}  \right)\d s.
 \end{equation*}
 Clearly, $|\mathbf{v}|=0$ in $B_1(0)$, and the conditions (2) and (3) in Theorem \ref{th1} are also fulfilled.
\end{example}

Several further comments are in order. First of all, it is worth pointing out condition (1) in Theorem \ref{th1} implies that the vorticity of the flow automatically vanishes on the boundary of $\mathcal{S}$. Hence, condition (3) in Theorem \ref{th2} becomes untenable in this scenario, rendering Theorem \ref{th2} inapplicable. We point out that condition (3) in Theorem \ref{th1} is primarily introduced to alleviate the severe degeneration near stagnation points of the flow. It serves to partly ensure the circularity of the flow near the periphery of $\mathcal{S}$, behaving similarly to that at infinity, with counterclockwise motion. This effectively guarantees that the flow $\mv$ does not exhibit excessive disorderliness near the periphery of $\mathcal{S}$. It is not clear to us whether such a restriction is only technical or deep-seated into the problem.

%It remains unclear whether such constraints on nearly circular behavior near the stagnation points can be removed.

On the other hand, in \cite{HN}, Hamel and Nadirashvili established the following theorem (stated equivalently below).
\begin{thA}[\cite{HN}, Theorem 1.3]\label{thA}
Let $\mv$ be a $C^2(\R^2\backslash B_R)$ flow solving \eqref{1-1} with $\Omega=B_R$ for some $R>0$.
 Assume also that:
   \begin{itemize}
    \item [(i)] $\{x\in {\R^2\backslash B_R}: |\mathbf{v}(x)|=0\}\subsetneq \partial B_R$.
    \item [(ii)]The following far-field conditions hold
    \begin{equation*}
          \liminf_{|x|\to +\infty}v^\theta(x)>0\ \ \ \text{and}\ \ \ v^r(x)=o\left(\frac{1}{|x|} \right)\ \text{as}\ |x|\to +\infty.
    \end{equation*}
  \end{itemize}
Then $\mathbf{v}$ is a circular flow, that is, there is a $C^2[R,+\infty)$ positive function $V: [R, +\infty)\to \R$ such that
  \begin{equation*}
    \mathbf{v}(x)=V(|x|)\mathbf{e}_\theta(x)
  \end{equation*}
  for all $|x|\ge R$.
\end{thA}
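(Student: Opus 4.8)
\emph{Strategy.} The plan is to pass to the stream function $u$ and to run a moving–plane argument for the resulting semilinear elliptic equation in the exterior domain $\R^2\setminus\overline{B_R}$. Two features make this more delicate than the usual exterior moving–plane scheme: the stream function is \emph{unbounded} at infinity, and the nonlinearity it satisfies is \emph{degenerate} (non‑Lipschitz) at the value attained on the stagnation circle $\partial B_R$. Conditions (2) and (3) are exactly what compensates for these two defects, respectively.

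\emph{Step 1 (stream function).} Since $\nabla\cdot\mv=0$ in $\R^2$, there is $u\in C^3(\R^2)$ with $\mv=\nabla^\perp u=(-\partial_2 u,\partial_1 u)$, unique up to a constant; normalize so that $u\equiv 0$ on $\overline{B_R}$. Because $\mv\in C^2$ and $\mv\equiv 0$ on $\mathcal S=B_R$, all first derivatives of $\mv$, hence all second derivatives of $u$, vanish on $\partial B_R$; in particular $\nabla u=0$ and the vorticity $\omega=\Delta u$ vanishes on $\partial B_R$. In $D:=\R^2\setminus\overline{B_R}$ we have $|\nabla u|=|\mv|>0$. From \eqref{1-5} one gets $\nabla\omega\cdot\mv=0$, so $\omega$ is constant on each connected component of a level set of $u$. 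Condition (3) gives, in $\{R<|x|<R+\delta\}$, that $\partial_r u=v^\theta>0$ and $|\partial_\theta u|=|r\,v^r|=o(\partial_r u)$, so the level sets of $u$ there are Jordan curves encircling $B_R$ that become asymptotically round as one approaches $\partial B_R$; condition (2) gives the symmetric picture near infinity (integrating $\partial_\theta u=-r\,v^r=o(1)$, the function $u$ is asymptotically radial there). A standard continuation argument, using that $u$ has no critical point in $D$, then upgrades this to: every level set of $u$ in $D$ is a single Jordan curve encircling $B_R$, these curves are nested, and $u$ increases from $0$ on $\partial B_R$ to $+\infty$. Hence $\omega=f(u)$ in $D$ for a single function $f$, which (using $u\in C^3$, $\nabla u\neq 0$ and $\nabla\omega\parallel\nabla u$) is $C^1$, in particular locally Lipschitz, on $(0,\infty)$, with $f(0)=0$. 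Since $u\equiv 0$ solves $\Delta u=f(u)$, the identity $\Delta u=f(u)$ holds on all of $\R^2$.

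\emph{Step 2 (moving planes).} Fix a unit vector $e$; we may assume $e=e_1$. For $\lambda\in\R$ set $T_\lambda=\{x_1=\lambda\}$, $x^\lambda=(2\lambda-x_1,x_2)$, $u_\lambda(x)=u(x^\lambda)$, and $w_\lambda=u-u_\lambda$ on the cap $\Sigma_\lambda=\{x\in\R^2:x_1>\lambda\}\setminus\overline{B_R}$; then $\Delta w_\lambda=f(u)-f(u_\lambda)$, which equals $c_\lambda(x)w_\lambda$ with $c_\lambda$ bounded on any region where $u$ and $u_\lambda$ stay in a fixed compact subset of $(0,\infty)$. I would run the classical scheme: (i) show $w_\lambda\ge 0$ in $\Sigma_\lambda$ for all large $\lambda$, using the asymptotic radiality from (2) — reflection across $T_\lambda$ with $\lambda>0$ strictly decreases $|x|$ on $\Sigma_\lambda$, so the radial part of $w_\lambda$ is positive and dominates the $o(1)$ angular corrections; (ii) set $\lambda^\ast=\inf\{\lambda>0:w_{\lambda'}\ge 0\ \text{in}\ \Sigma_{\lambda'}\ \forall\,\lambda'\ge\lambda\}$ and prove $\lambda^\ast=0$: if $\lambda^\ast>0$, the strong maximum principle and Hopf's lemma on the non‑degenerate part of $\Sigma_{\lambda^\ast}$, together with the near‑circular behaviour from (3) used to pin down the sign of $w_\lambda$ in the collar where $u$ or $u_\lambda$ is close to $0$, force $w_{\lambda^\ast}>0$ in the interior and allow a further decrease of $\lambda$, a contradiction; (iii) hence $w_0\ge 0$ on $\{x_1>0\}\cap D$, and the same argument from $\lambda=-\infty$ gives $w_0\le 0$, so $u$ is symmetric about $\{x_1=0\}$. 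As $e$ is arbitrary, $u$ is radial in $D$, so $\mv=\nabla^\perp u=u'(|x|)\mathbf e_\theta$ with $V:=u'\in C^2(R,+\infty)$ positive by (2)–(3), which is the claim.

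\emph{Main obstacle.} The crux is Step 2(ii) in the collar near $\partial B_R$: since the vorticity vanishes on $\partial B_R$ (Step 1), one has $|f(s)|\lesssim|s|^{1/2}$ near $s=0$, so $f$ is \emph{not} Lipschitz there and neither the strong maximum principle nor Hopf's lemma applies in the usual form near the stagnation circle. Condition (3) is precisely what bypasses this: it says the flow is almost circular near $\partial B_R$ and co‑rotating with the far field, which lets one control the sign of $w_\lambda$ in that collar by a direct geometric argument on the level curves rather than by the maximum principle. The secondary, milder difficulty — that $u\to+\infty$, so the decay normally used to start and close exterior moving‑plane arguments is unavailable — is handled by the quantitative far‑field radiality in (2); equivalently, the inversion $x\mapsto x/|x|^2$ turns infinity into a puncture at which $u$ is asymptotically radial, reducing matters to a moving‑plane scheme in a punctured disk.
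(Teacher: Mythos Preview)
Your proposal does not address Theorem~A at all; it addresses Theorem~\ref{th1}. Theorem~A is a result \emph{cited} from Hamel--Nadirashvili \cite{HN} and is not proved in the present paper. Its hypotheses are that $\mv\in C^2(\R^2\setminus B_R)$ (not on all of $\R^2$), that the stagnation set is a \emph{proper} subset of $\partial B_R$ (so there exist non-stagnation boundary points), and the far-field condition~(ii). There is no ``condition~(3)'', no assumption $\mathcal S=B_R$, and $\mv$ need not even be defined inside $B_R$. Your argument, by contrast, takes $\mv\in C^2(\R^2)$, sets $u\equiv 0$ on $\overline{B_R}$, assumes $\mv\equiv 0$ on all of $B_R$, deduces that $\omega$ vanishes on $\partial B_R$, and repeatedly invokes the collar condition $v^r=o(v^\theta)$. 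None of this is available in Theorem~A.

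In particular, the ``main obstacle'' you describe --- the non-Lipschitz degeneracy of $f$ at $0$ because $\omega$ vanishes on $\partial B_R$ --- is absent in Theorem~A: since some boundary point is non-stagnant, Lemma~\ref{le1} gives $f\in C^1[0,\infty)$, and the standard strong maximum principle and Hopf lemma apply up to the boundary without any special collar analysis. Conversely, the tool you rely on to overcome that obstacle, condition~(3), does not exist in Theorem~A.

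If your intent was in fact Theorem~\ref{th1}, your outline is in the right direction but more elaborate than needed. The paper does not handle the collar by a ``direct geometric argument on the level curves'': instead, condition~(3) feeds straight into Lemma~\ref{le4}(iii), giving $\partial_\nu u>0$ near $\partial B_R$ for every inward direction $\nu$, and this local monotonicity is all that is required to push the moving-plane comparison (Lemma~\ref{le5}) through the degenerate collar. Since $\Omega=B_R$ is already symmetric about every line through the origin, Lemma~\ref{le5} applies with $\varepsilon=0$ in each direction; reversing the direction yields equality, hence radial symmetry of $u$. No inversion trick, no separate treatment of the initiation at $\lambda=+\infty$, and no corner/Hopf analysis at a critical hyperplane are needed for Theorem~\ref{th1}.
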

We would like to point out that the non-stagnation condition (i) in Theorem A appears to be essential in the argument presented in \cite{HN}. The effect of this condition is twofold. Firstly, it ensures the absence of interior stagnation points, which is crucial for proving that the stream function of the flow satisfies a semi-linear elliptic equation. Secondly, it prevents the flow from stagnating along the entire boundary of the domain, thereby facilitating the implementation of the moving plane scheme. The case where the boundary points are all stagnant points can be regarded as a degenerate case. This degenerate case is out of the scope of Theorem A. Our Theorems \ref{th2} and \ref{th1} deal mainly with such a degenerate case and can therefore be taken to be a complement of Theorem A.

Lastly, we mention that it would be interesting to construct a non-circular flow in the whole plane such that its stagnation points form precisely a disk. It is also intriguing to construct counterexamples to the rigidity results established in \cite{HN3, HN1, HN, Ruiz, Wang} in scenarios where specific conditions are not met. Indeed, several counterexamples have recently been rigorously constructed; see, e.g., \cite{Ago, Kam, Pie, Ros, Ruiz2}.

\subsection*{Comments on related works}
In recent years, there has been tremendous interest in investigating the rigidity in steady motion of an ideal fluid. Kalish \cite{Kal} first showed that steady flows $\mv$ in the two-dimensional strip $\R\times (0, 1)$ satisfying $v_1\not=0$ are necessarily parallel shear flows. In \cite{HN3}, Hamel and Nadirashvili further proved that the same conclusion also holds under the non-stagnation condition $|\mv|\not=0$. Moreover, rigidity results for parallel shear flows under the non-stagnation condition $|\mv|\not=0$ also hold in two-dimensional half-planes \cite{HN3, HN2} and in the whole plane \cite{HN1}. We refer the interested readers to \cite{Bed, Cons, Coti, HN0, Ion, Li, Zill} and the relevant references therein for more rigidity/non-rigidity results and additional stability analyses pertaining to parallel shear flows.

In \cite{HN}, Hamel and Nadirashvili considered steady Euler flows in bounded annuli, as well as in complements of disks, in punctured disks and in the punctured plane. They showed that if the flow does not have any stagnation point and satisfies the tangential boundary conditions together with further conditions at infinity in the case of unbounded domains and at the center in the case of punctured domains, then the flow is circular. Recently, Wang and Zhan \cite{Wang} improved upon certain rigidity results discussed in \cite{HN} through the utilization of a rearrangement technique known as continuous Steiner symmetrization, developed by Brock in \cite{Bro0, Bro1}. They showed that in bounded annuli, steady Euler flows must be circular flows provided they have no interior stagnation point. Moreover, in disks, steady Euler flows with only one interior stagnation point and tangential boundary conditions must also be circular flows.

As mentioned earlier, Ruiz \cite{Ruiz} established some symmetry results for steady solutions of the two-dimensional Euler equations with compact support. Specifically, he showed that if the non-stagnant points coincidentally form either a annular-shaped domain or a punctured simply connected domain, then the flow must be circular flows. More robust results are also established in the work of Wang and Zhan \cite{Wang}.

Serrin-type free boundary problems with overdetermined boundary conditions were also investigated by Hamel and Nadirashvili; see Theorems 1.10 and 1.13 in \cite{HN} (see also \cite{Ruiz, Wang}). We would like to underline that non-stagnation conditions in these results are generally necessary. Ruiz \cite{Ruiz2} constructed a non-circular steady Euler flow in a simply connected domain with overdetermined boundary conditions, exhibiting at least two stagnation points within the domain. Similar nonsymmetric solutions in annular-shaped domains are also known in the literature; see, e.g., \cite{Ago, Kam}.

It is noteworthy that the problems addressed in the present paper are formulated within the context of exterior domains. It appears that there has been relatively limited exploration in this aspect. In comparison with the aforementioned work, the results of this paper can be regarded as a complementary addition in this context.

\section{Proofs for the main results}\label{s2}
This section is dedicated to the proof of Theorems \ref{th3}, \ref{th2} and \ref{th1}. The primary focus of the proof is to analyze the stream function of the flow and subsequently reformulate the problem into a symmetry issue regarding the positive solution of the semilinear elliptic equation with overdetermined boundary conditions.

In a celebrated paper \cite{Se}, Serrin inaugurates the investigation into the radial symmetry of solutions to elliptic equations subject to overdetermined boundary conditions. Serrin's approach is now known as the method of moving planes. It is a very powerful technique in proving symmetry results for positive solutions of elliptic and parabolic problems in symmetric domains. We refer the reader to the survey articles \cite{Cir, Ni, Nit, Pie, Si2} and the references therein for more extensive discussions on this and related topics.

  Several symmetry results for elliptic problems set in exterior domains were established in \cite{Aft, Rei2, Ros, Si}. However, the nonlinearity in these established results is often constrained by monotonicity conditions, which renders them inapplicable to our scenario. Our proof is based on a refinement of the method of moving planes, mainly by appropriately combining and developing the ideas in \cite{HN, Rei, Ruiz}.

Section \ref{s2.1} is devoted to some common lemmas. The proof of Theorem \ref{th3} is completed in Section \ref{s2.2}. In Section \ref{s2.3}, we present the proof of Theorem \ref{th2}. The proof of Theorem \ref{th1} is done in Section \ref{s2.4}

\subsection{Some common lemmas}\label{s2.1}
Let $\Omega\subset \R^2$ be a simply connected bounded domain of class $C^2$ and $n$ be the outward unit normal on $\partial \Omega$. We consider steady incompressible flows solving the Euler equations
\begin{equation}\label{2-1}
  \left\{
\begin{aligned}
  \mathbf{v}\cdot \nabla \mathbf{v} +\nabla P & =0&\text{in}\ \, \R^2\backslash \overline{\Omega},\\
  \nabla\cdot\mathbf{v} & =0\,\ \, \ \ \ \ \ \ \  \ \, &\text{in}\ \, \R^2\backslash \overline{\Omega},\\
   \mathbf{v}\cdot n &  =0\,\ \, \ \ \ \ \ \ \  \ \,&\text{on}\ \, \partial \Omega.\ \ \,\\
\end{aligned}
\right.
\end{equation}
%We assume that $\Omega$ contains the origin.
The following result is concerned with the existence and some elementary properties of the stream function of the flow, which is essentially contained in \cite{HN} (see also \cite{Ruiz}).
\begin{lemma}[\cite{HN}]\label{le1}
  Let $\mathbf{v}$ be a $C^2({\R^2\backslash\Omega})$ flow solving \eqref{2-1}. Assume also that:
  \begin{itemize}
    \item [(1)]$|\mathbf{v}|>0$ in $\R^2\backslash \overline{\Omega}$.
    \item [(2)]The following far-field conditions hold
    \begin{equation*}
      \liminf_{|x|\to +\infty}v^\theta(x)>0\ \ \ \text{and}\ \ \ v^r(x)=o\left(\frac{1}{|x|} \right)\ \text{as}\ |x|\to +\infty.
    \end{equation*}
  \end{itemize}
  Then there is a unique (up to additive constants) stream function $u: \R^2\backslash \Omega\to \R$ of class $C^3(\R^2\backslash \Omega)$ such that
  \begin{equation*}
  \nabla^\perp u=\mathbf{v},\ \ \ \text{that is},\ \ \ \partial_1 u=v_2\ \ \text{and}\ \ \partial_2u=-v_1.
\end{equation*}
Moreover, we have that
\begin{equation}\label{2-5}
  u=0\ \, \text{on}\ \,\partial \Omega,\ \ \ 0<u<+\infty\ \,\text{in}\ \,\R^2\backslash\overline{\Omega},\ \ \ \text{and}\ \ \ \lim_{|x|\to +\infty}u(x)=+\infty.
\end{equation}
Furthermore, there exists a function $f\in C([0, +\infty)) \cap C^1(0, +\infty)$ such that
  \begin{equation}\label{2-2}
   \Delta u+f(u)=0\ \ \ \text{in}\ \ \R^2\backslash \Omega.
  \end{equation}
If, additionally, $\{x\in \partial \Omega: |\mv(x)|=0\}\subsetneq \partial \Omega$, then the function $f$ is of class $C^1[0, +\infty)$.
\end{lemma}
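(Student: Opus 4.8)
The plan is to follow the scheme of \cite{HN} (see also \cite{Ruiz}), keeping track of the fact that here $|\mv|$ is allowed to vanish on $\partial\Omega$. First I would construct the stream function. Since $\nabla\cdot\mv=0$ in $\R^2\setminus\overline{\Omega}$, the $1$-form $v_2\,\d x_1-v_1\,\d x_2$ is closed on $\R^2\setminus\Omega$; its integral along $\partial\Omega$ equals $-\int_{\partial\Omega}\mv\cdot n\,\d s=0$ by the slip condition, and since $\R^2\setminus\Omega$ is homotopy equivalent to a circle this makes the form exact. This produces $u\in C^3(\R^2\setminus\Omega)$, unique up to an additive constant, with $\partial_1u=v_2$ and $\partial_2u=-v_1$, i.e. $\nabla^\perp u=\mv$; since $\nabla u\cdot\tau=-\mv\cdot n=0$ along $\partial\Omega$, the function $u$ is constant there and I would normalize it to vanish on $\partial\Omega$. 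For the properties \eqref{2-5} I would use the polar identity $\partial_\rho u=\nabla u\cdot\mathbf{e}_r=v^\theta$: from $\liminf_{|x|\to+\infty}v^\theta>0$ there are $\rho_0$ and $c>0$ with $\overline{\Omega}\subset B_{\rho_0}$ and $v^\theta\ge c$ on $\{|x|\ge\rho_0\}$, and integrating along rays gives $u(x)\ge\min_{|y|=\rho_0}u+c(|x|-\rho_0)$, so $u$ is proper and $u\to+\infty$. Moreover $|\nabla u|=|\mv|>0$ in $\R^2\setminus\overline{\Omega}$, so $u$ has no critical point there; since $u$ is proper, $u=0$ on $\partial\Omega$ and $u\to+\infty$, its global minimum $0$ can only be attained on $\partial\Omega$, which yields $0<u<+\infty$ in $\R^2\setminus\overline{\Omega}$.

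Next I would study the level sets. As $u$ is proper and critical-point free in the open exterior, every $t>0$ is a regular value, so $\{u=t\}$ is a finite disjoint union of $C^3$ Jordan curves. For $t>\max_{|x|\le\rho_0}u$ the strict monotonicity $\partial_\rho u=v^\theta\ge c$ on $\{|x|\ge\rho_0\}$ forces $\{u=t\}$ to be a single radial graph $\rho=g(t,\theta)$ over the circle, hence one Jordan curve. The normalized gradient field $X:=\nabla u/|\nabla u|^2$ is $C^2$ in $\R^2\setminus\overline{\Omega}$, satisfies $X\cdot\nabla u\equiv1$, has complete forward flow (a trajectory cannot escape to infinity in finite $u$-time, nor reach $\partial\Omega$), and its time-$(t_2-t_1)$ map is a diffeomorphism from $\{u=t_1\}$ onto $\{u=t_2\}$ for $0<t_1<t_2$; hence the number of components of $\{u=t\}$ is independent of $t\in(0,+\infty)$, so it equals $1$ throughout. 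Thus each $\{u=t\}$ with $t>0$ is a single Jordan curve around $\Omega$, equivalently a single periodic streamline; and a compactness argument (properness together with $u>0$ in the open exterior) gives Hausdorff convergence $\{u=t\}\to\partial\Omega$ as $t\to0^+$.

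Then I would extract $f$. Equation \eqref{2-1} gives $\mv\cdot\nabla\omega=0$ with $\omega=\Delta u$, and since $\mv=\nabla^\perp u$ this means $\nabla\omega$ is parallel to $\nabla u$ wherever $\nabla u\ne0$, i.e. everywhere in $\R^2\setminus\overline{\Omega}$; hence $\Delta u=\omega$ is constant on every connected level set $\{u=t\}$, which defines $f(t):=-\omega|_{\{u=t\}}$ and gives $\Delta u+f(u)=0$ in $\R^2\setminus\overline{\Omega}$. Flowing along $X$ from a point $y_0\in\{u=t_0\}$ one has $f(t_0+s)=-\Delta u(\phi_s(y_0))$, which is $C^1$ in $s$ because $\Delta u\in C^1$ and the flow of a $C^2$ field is $C^2$; so $f\in C^1(0,+\infty)$. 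At the endpoint $0$: $\Delta u$ is continuous up to $\partial\Omega$ and equals the constant $-f(t)$ on $\{u=t\}$, so, together with $\{u=t\}\to\partial\Omega$, this forces $\lim_{t\to0^+}f(t)$ to exist and $\Delta u$ to be constant on $\partial\Omega$; setting $f(0)$ equal to that limit gives $f\in C([0,+\infty))$ and $\Delta u+f(u)=0$ on $\partial\Omega$ as well. Finally, if $\{x\in\partial\Omega:|\mv(x)|=0\}\subsetneq\partial\Omega$, choose $x_0\in\partial\Omega$ with $\nabla u(x_0)\ne0$; then $X$ is $C^2$ near $x_0$, the flow line through $x_0$ enters $\R^2\setminus\overline{\Omega}$ with $u(\phi_s(x_0))=s$, and $f(s)=-\Delta u(\phi_s(x_0))$ is $C^1$ on $[0,\ep)$, upgrading $f$ to class $C^1[0,+\infty)$.

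The step I expect to be the main obstacle is everything near $\partial\Omega$ when $\mv$, hence $\nabla u$, degenerates there: keeping the level sets single Jordan curves as $t\to0^+$, controlling the gradient-like flow near the stagnant part of $\partial\Omega$, and obtaining the sharp regularity of $f$ at $0$; it is exactly the non-degeneracy hypothesis $\{x\in\partial\Omega:|\mv(x)|=0\}\subsetneq\partial\Omega$ that provides the $C^1$-up-to-the-boundary conclusion. The purely interior and far-field parts are essentially those of \cite{HN, Ruiz}, and I would simply invoke them.
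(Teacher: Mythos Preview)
The paper does not give its own proof of this lemma; it simply attributes the result to \cite{HN} (see also \cite{Ruiz}) and quotes it. Your outline is correct and reproduces exactly the scheme of those references: construct $u$ as a primitive of the closed, period-free $1$-form $v_2\,\d x_1-v_1\,\d x_2$; use $\partial_\rho u=v^\theta$ and the far-field hypothesis to obtain properness and $u\to+\infty$; exploit $|\nabla u|>0$ in the open exterior to force the minimum to the boundary and to make every level $\{u=t\}$, $t>0$, a compact $1$-manifold; show connectedness of these level sets via the normalized gradient flow; read off $f$ from $\mv\cdot\nabla\omega=0$; and get the regularity of $f$ at $0$ by flowing from a non-stagnant boundary point. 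Your caveats about the degenerate part of $\partial\Omega$ are well placed: the completeness of the normalized gradient flow is only used between levels $0<t_1<t_2$, where $\{t_1\le u\le t_2\}$ is a compact subset of the open exterior and $|\nabla u|$ is bounded away from zero, so no difficulty arises there; the continuity of $f$ at $0$ then follows from the Hausdorff convergence $\{u=t\}\to\partial\Omega$ and the continuity of $\Delta u$ up to the boundary, precisely as you describe.
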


 For $\lambda\in \R$, we define
 \begin{equation*}
   \mathcal{T}_\lambda=\{x=(x_1, x_2)\in \R^2: x_1=\lambda\},\ \ \ \mathcal{H}_\lambda=\{x=(x_1, x_2)\in \R^2: x_1>\lambda\}.
 \end{equation*}
Let $\mathcal{R}_\lambda$ be the reflection with respect to $\mathcal{T}_\lambda$. For $x\in \R^2$, set $x^\lambda=\mathcal{R}_\lambda(x)=(2\lambda-x_1, x_2)$.

The following lemma states that the streamlines of the flow are almost circular at infinity (although we are only discussing the $x_1$-direction here, the result holds true for each direction); see Lemma 3.1 in \cite{HN}.

\begin{lemma}[\cite{HN}]\label{le2}
  Let $\mv$ be as in Lemma \ref{le1}. Consider any point $x\in \R^2\backslash \overline{\Omega}$. Let $\xi_x$ be the solution of
	\begin{align*}
		\begin{cases}
			 \dot{\xi}_x(t) = \mathbf{v}(\xi_x(t)), &\\
             \xi_x(0)=x.\ \ &
\end{cases}
	\end{align*}
Then $\xi_x$ is defined in $\mathbb{R}$ and periodic, and the streamline $\Phi_x:=\xi_x(\mathbb{R})$ is a $C^2$ Jordan curve surrounding $\overline{\Omega}$ in $ \R^2\backslash \overline{\Omega}$.

Let $D_x$ denote the bounded connected component of $\R^2\backslash \Phi_x$. Then for each $\varepsilon>0$, there exists $R_\varepsilon>0$ such that
\begin{equation*}
  \mathcal{R}_\lambda \left(\mathcal{H}_\lambda \cap \overline{D_x} \right) \subset D_x
\end{equation*}
for all $\lambda>\varepsilon$ and $|x|\ge R_\varepsilon$.
\end{lemma}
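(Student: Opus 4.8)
\textbf{Proof proposal for Lemma \ref{le2}.}

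The plan is to establish the two assertions in turn. First I would prove that each streamline is a periodic Jordan curve surrounding $\overline{\Omega}$. Since $|\mv|>0$ in $\R^2\backslash\overline{\Omega}$ and $\mv$ is of class $C^2$ there, the orbit $\xi_x$ through any $x$ is well defined locally; the key point is global existence and periodicity. By Lemma \ref{le1} the stream function $u$ exists, is of class $C^3$, vanishes on $\partial\Omega$, is strictly positive in $\R^2\backslash\overline{\Omega}$, and tends to $+\infty$ at infinity. Because $\nabla^\perp u=\mv$, the function $u$ is constant along trajectories of $\xi_x$, so the orbit through $x$ is confined to the level set $\{u=u(x)\}$, which is a compact subset of $\R^2\backslash\overline{\Omega}$ (compactness follows from $u\to+\infty$ at infinity and $u>0$ away from $\partial\Omega$). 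Confinement to a compact set forces the orbit to be defined for all $t\in\R$. Since $|\mv|=|\nabla u|>0$ on this level set, the set $\{u=u(x)\}$ is a one-dimensional $C^2$ manifold; a compact connected one-dimensional manifold without boundary is diffeomorphic to a circle, and the flow on it with nonvanishing speed is periodic. To see that $\Phi_x$ is a Jordan curve \emph{surrounding} $\overline{\Omega}$, note that $\Phi_x$ separates the plane into a bounded component $D_x$ and an unbounded one; since $u=u(x)>0$ on $\Phi_x$ while $u=0$ on $\partial\Omega$ and $u\to+\infty$ at infinity, $\partial\Omega$ cannot lie in the unbounded component, hence $\overline{\Omega}\subset D_x$. (These facts are exactly Lemma 3.1 of \cite{HN}; I would simply cite that source for the details.)

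For the second assertion — the approximate circularity at infinity — I would exploit the far-field hypothesis \eqref{1-3}, i.e. $\liminf_{|x|\to+\infty}v^\theta(x)>0$ and $v^r(x)=o(1/|x|)$. The idea is that along a streamline $\Phi_x$ the radius $|\xi_x(t)|$ satisfies $\frac{\d}{\d t}|\xi_x(t)|=\mv(\xi_x(t))\cdot\mathbf{e}_r(\xi_x(t))=v^r(\xi_x(t))$, while the angular coordinate satisfies $|\xi_x(t)|\frac{\d\theta}{\d t}=v^\theta(\xi_x(t))$. Dividing, the variation of $\log|\xi_x|$ over one full loop in $\theta$ is controlled by $\int v^r/(|\xi_x|\,v^\theta)$, and the hypotheses force this to be $o(1)$ as the streamline goes to infinity; hence for large $|x|$ the streamline $\Phi_x$ lies in a thin annulus $\{\rho_x(1-\eta_x)\le|y|\le\rho_x(1+\eta_x)\}$ with $\eta_x\to0$ as $|x|\to\infty$, where $\rho_x\to\infty$. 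This "almost circular" property is what yields the containment statement: for a genuine circle $\{|y|=\rho\}$ and any $\lambda>0$, one has $\mathcal{R}_\lambda(\mathcal{H}_\lambda\cap\{|y|\le\rho\})\subset\{|y|<\rho\}$ (reflecting the part of a disk to the right of a vertical line $x_1=\lambda>0$ strictly into the disk); a perturbation of the circle by a factor $1\pm\eta_x$ still satisfies the inclusion into $D_x$ once $\lambda>\varepsilon$ is fixed and $\eta_x$ is small enough, i.e. once $|x|\ge R_\varepsilon$ for a suitable $R_\varepsilon$. Making the constants uniform over all $\lambda>\varepsilon$ is straightforward since the "slack" in the disk inclusion is increasing in $\lambda$, so the worst case is $\lambda$ near $\varepsilon$.

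The main obstacle, and the step requiring the most care, is the quantitative control of the eccentricity $\eta_x$ of the streamline $\Phi_x$: one must turn the asymptotic conditions $v^r=o(1/|x|)$ and $\liminf v^\theta>0$ into a genuine uniform bound on the ratio $\sup_{\Phi_x}|y|/\inf_{\Phi_x}|y|$ that tends to $1$, and this requires ruling out the possibility that a single streamline dips close to $\overline{\Omega}$ while also reaching far out (which cannot happen for $|x|$ large because $u$ is proper and monotone-like in the radial direction at infinity, but this needs to be argued). Since all of this is carried out in \cite[Lemma 3.1]{HN}, I would present the argument in that form and refer to \cite{HN} for the full estimates, reproducing only the reflection–inclusion computation, which is elementary plane geometry.
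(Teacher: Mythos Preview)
Your proposal is correct and in fact goes further than the paper itself: the paper does not give a proof of this lemma at all but simply records it as a citation of \cite[Lemma~3.1]{HN}. Your sketch of the two assertions --- compactness of level sets of $u$ forcing periodic Jordan streamlines, and the far-field conditions forcing near-circularity and hence the reflection inclusion --- is an accurate outline of the argument in \cite{HN}, and your explicit intention to defer to that reference for the quantitative estimates matches exactly how the present paper handles the lemma.
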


The following result is on the behavior of the stream function of the flow near the boundary of $\Omega$, which serves as an analogue of Proposition 2 in \cite{Rei}.
\begin{lemma}\label{le4}
Let $\mv, u, f$ be as in Lemma \ref{le1}. Assume also that one of the following conditions holds:
\begin{itemize}
  \item [(i)]$\{x\in \partial \Omega: |\mv(x)|=0\}\subsetneq \partial \Omega$.
  \item [(ii)]The vorticity $\omega=\partial_1v_2-\partial_2 v_1$ does not vanish on $\partial \Omega$.
    \item [(iii)]$\Omega=B_R$ for some $R>0$. In addition, there exists a $\delta>0$ such that $v^\theta(x)>0$ for all $R<|x|<R+\delta$, and $v^r(x)=o\left(v^\theta(x)\right)$ as $|x|\to R^+$.

\end{itemize}
 For $z\in \partial \Omega$ let $\nu$ be an inward unit direction into $\R^2\backslash \overline{\Omega}$, i.e. $n(z)\cdot \nu>0$. Then there exists a disk $B_\rho (z)$ such that $\partial_\nu u>0$ in $B_\rho(z)\backslash \overline{\Omega}$.
\end{lemma}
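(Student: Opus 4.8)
The plan is to establish a Hopf-type lemma for the stream function $u$ near $\partial\Omega$, treating the three cases (i), (ii), (iii) by controlling the local structure of the semilinear equation $\Delta u + f(u) = 0$. Fix $z \in \partial\Omega$ and the inward direction $\nu$. Since $u = 0$ on $\partial\Omega$ and $u > 0$ in $\R^2\backslash\overline{\Omega}$, the point $z$ is a boundary minimum; the goal is to show $\partial_\nu u > 0$ in a full one-sided neighborhood $B_\rho(z)\backslash\overline{\Omega}$, not merely at $z$ itself. The first step is to observe that $\nabla u(z) = \mathbf{v}(z)^\perp$ (rotated), so $\nabla u(z) \neq 0$ precisely when $z$ is not a stagnation point; in case (i) we may choose $z$ among the non-stagnant boundary points, but this does not immediately cover all of $\partial\Omega$, and in cases (ii), (iii) every boundary point is stagnant, so $\nabla u(z) = 0$ and the classical Hopf lemma does not apply directly.

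The core of the argument is to analyze the sign of $f(0)$ and, if $f(0) = 0$, the behavior of $f(t)$ for small $t > 0$. Note $f(u) = -\Delta u$, and on $\partial\Omega$ (a streamline) the vorticity equals $\omega = f(u) = f(0)$ up to sign; more precisely $-\Delta u = \omega$, and since $\omega$ is constant along streamlines hence constant on $\partial\Omega$, we get that $f(0) = -\omega|_{\partial\Omega}$ is a well-defined constant. In case (ii), $f(0) \neq 0$, so near $z$ the equation is $\Delta u = -f(0) + o(1)$ with $f(0)$ of one sign; comparing $u$ with the solution of $\Delta w = \pm c$ in a small half-disk-like region and invoking boundary point lemmas for subsolutions/supersolutions with a quadratic correction, one obtains $u(x) \gtrsim \mathrm{dist}(x,\partial\Omega)^2$ and a lower bound on $\partial_\nu u$ linear in the distance, giving strict positivity in $B_\rho(z)\backslash\overline{\Omega}$. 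In case (i), at non-stagnant $z$ one has $\nabla u(z) \neq 0$ and the implicit function theorem plus continuity of $\nabla u$ gives the claim near such $z$; the reduction showing this suffices for the later moving-plane scheme is where I'd lean on the structure already set up in Lemma~\ref{le2}. In case (iii), with $\Omega = B_R$, one uses the hypotheses $v^\theta > 0$ and $v^r = o(v^\theta)$ near $\partial B_R$ to show directly that $\partial_r u = v^\theta > 0$ just outside $B_R$, which is exactly the statement with $\nu = \mathbf{e}_r$; a short argument then transfers positivity of the radial derivative to any inward $\nu$ on a small disk, using that $|\nabla u| \sim v^\theta$ and the radial component dominates.

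The main obstacle is case (ii) — and more generally any situation where $f$ is merely $C^0$ at $0$ and $f(0)$ could be zero or where the degeneracy at the stagnant boundary is genuine. The delicate point is that $f$ need not be Lipschitz near $0$, so one cannot blindly apply the strong maximum principle or the Hopf lemma; instead one must exploit that $f(0) \neq 0$ (equivalently $\omega \neq 0$ on $\partial\Omega$) to get a definite-sign right-hand side, and then run a barrier argument in a narrow one-sided neighborhood. I would construct the barrier as $w(x) = a\, d(x) - b\, d(x)^2$ where $d$ is the (smooth, since $\partial\Omega \in C^2$) signed distance to $\partial\Omega$, choosing $b$ to absorb $\|f\|_{L^\infty}$ near $0$ and $a$ small, check $\Delta w \geq -f(u)$ in a thin strip, and conclude $u \geq w > 0$ hence $\partial_\nu u > 0$ there; the regularity $u \in C^3$ away from the stagnation set and $u \in C^2$ up to $\partial\Omega$ (from Lemma~\ref{le1}, noting $f \in C^1[0,\infty)$ under (i) but only $C^0$ under (ii)) must be tracked carefully to justify the comparison. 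Shrinking $\rho$ at the end handles the transition from a lower bound near $z$ to the clean statement $\partial_\nu u > 0$ on $B_\rho(z)\backslash\overline{\Omega}$.
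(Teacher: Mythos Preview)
Your handling of case (iii) matches the paper's direct computation and is correct. Cases (i) and (ii), however, contain genuine gaps.

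In case (i), you treat only non-stagnant $z$ and then defer the remaining boundary points to ``the later moving-plane scheme.'' But the lemma must hold at \emph{every} $z\in\partial\Omega$, and Lemma~\ref{le5} uses it precisely that way: the limit point $\bar{x}\in\mathcal{T}_{\lambda_*}\cap\partial\Omega$ arising there may well be stagnant, and one needs $\partial_1 u>0$ in a full one-sided neighborhood of it. The paper does not argue via non-stagnation at all; it uses the consequence of (i) recorded in Lemma~\ref{le1}, namely $f\in C^1[0,\infty)$, which lets one write $\Delta u + c(x)u = -f(0)$ with bounded $c$ and invoke Reichel's Proposition~2 uniformly along $\partial\Omega$. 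You note at the end that $f\in C^1$ under (i) but never exploit it.

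In case (ii), your barrier $w=a\,d-b\,d^2$ with $a>0$ cannot satisfy $u\ge w$ near a stagnant boundary point, since there $\partial_n u=0$ while $\partial_n w=a>0$, so $u<w$ for small $d>0$. Even where the comparison held, the step ``$u\ge w>0$ hence $\partial_\nu u>0$ there'' is a non sequitur: a lower bound on $u$ gives no sign information on $\partial_\nu u$ away from $\partial\Omega$. The earlier heuristic ``$u\gtrsim d^2$ and $\partial_\nu u\gtrsim d$'' is the right target, but your barrier does not deliver it. The paper instead observes that under (ii) one has $f(0)<0$ (from \eqref{2-5} and $\Delta u=-f(u)$), so that $\Delta u\to -f(0)>0$ near $\partial\Omega$, and then runs the $f(0)<0$ branch of Reichel's Proposition~2, in which the strictly positive forcing term drives the Hopf-type conclusion in a full one-sided neighborhood without needing Lipschitz control on $f$ near $0$.
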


\begin{proof}
If condition (i) holds, then $f$ in \eqref{2-2} is of class $C^1[0, +\infty)$. The conclusion can be readily derived by mimicking the proof of Proposition 2 in \cite{Rei}. Furthermore, if condition (ii) holds, then $f(0)<0$ due to \eqref{2-5} and the fact that $\Delta u=-f(u)$. One can easily obtain the desire results by obvious modification to the proof of Proposition 2 in \cite{Rei} (see in particular Case 2).

Now, let us assume that condition (iii) holds. Note that
\begin{equation*}
  n(z)=\mathbf{e}_r(z)\ \ \ \text{and}\ \ \ \nabla u(x)=v^\theta(x)\mathbf{e}_r(x)-v^r(x) \mathbf{e}_\theta(x).
\end{equation*}
Let us express $\nu$ as $\nu=\nu^r(x)\mathbf{e}_r(x)+\nu^\theta (x) \mathbf{e}_\theta(x)$. It follows from the assumption $\nu \cdot \mathbf{e}_r(z)>0$ that $\nu^r(x)$ is positive and away from zero in some neighborhood of $z$. We conclude that
\begin{equation*}
\begin{split}
    \partial_\nu u(x) & = \left(v^\theta(x)\mathbf{e}_r(x)-v^r(x) \mathbf{e}_\theta(x)\right)\cdot \left(\nu^r(x)\mathbf{e}_r(x)+\nu^\theta (x) \mathbf{e}_\theta(x)\right) \\
     & =v^\theta(x)\nu^r(x)-v^r(x)\nu^\theta(x)>0
\end{split}
\end{equation*}
when $x$ is sufficiently close to $z$, since $ v^r(x)=o\left(v^\theta(x)\right)$ as $|x|\to R^+$ by assumption.
\end{proof}

\begin{remark}
  We may assume $\rho$ so small, that $\nu$ is an inward unit direction into $\R^2\backslash \overline{\Omega}$ for all boundary points in $\partial \Omega \cap B_\rho (z)$.
\end{remark}

%\begin{remark}\label{re2-5}
%Suppose $\Omega = B_R$ for some $R > 0$. Under the assumptions of Lemma \ref{le3}, it follows that the angular velocity $v^\theta=\partial_ru>0$ near the periphery of $B_R$. Moreover, it holds that
%\begin{equation*}
%  \lim_{|x|\to R^+} \frac{v^r(x)}{v^\theta(x)}=0,
%\end{equation*}
%which implies that near the boundary of the domain, the radial component of the velocity is infinitesimally smaller than the angular component. Roughly speaking, the flow is nearly circular near the boundary.
%\end{remark}

The following lemma provides a comparative analysis of the stream function of the flow, analogous to Proposition 1.14 in \cite{HN}.

\begin{lemma}\label{le5}
  Suppose the assumptions of Lemma \ref{le4} hold. Let $\varepsilon\ge0$. Assume also that
  \begin{equation}\label{2-7}
    \mathcal{R}_\lambda\left(\mathcal{H}_\lambda \cap \overline{\Omega} \right) \subset \Omega\ \ \ \text{for all}\ \lambda>\varepsilon.
  \end{equation}
  Then, for every $\lambda\ge \varepsilon$, there holds
  \begin{equation*}
    u(x)\ge u(x^\lambda)\ \ \ \text{for all}\ x\in \mathcal{H}_\lambda \backslash \mathcal{R}_\lambda(\overline{\Omega}).
  \end{equation*}
\end{lemma}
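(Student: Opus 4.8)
The plan is to run a moving-planes argument on the stream function $u$ from $\lambda = +\infty$ down to $\lambda = \varepsilon$, comparing $u$ with its reflection $u_\lambda(x) := u(x^\lambda)$ across the hyperplane $\mathcal{T}_\lambda$. Fix $\lambda \ge \varepsilon$ and set $\Sigma_\lambda := \mathcal{H}_\lambda \setminus \mathcal{R}_\lambda(\overline{\Omega})$, i.e. the part of the open half-plane $\mathcal{H}_\lambda$ lying outside the reflected obstacle. By hypothesis \eqref{2-7}, $\mathcal{R}_\lambda(\mathcal{H}_\lambda \cap \overline{\Omega}) \subset \Omega$, so $x \mapsto x^\lambda$ maps $\Sigma_\lambda$ into $\R^2 \setminus \overline{\Omega}$ (indeed into $\mathcal{R}_\lambda(\mathcal{H}_\lambda) \setminus \overline{\Omega}$, the mirror half), hence $u_\lambda$ is well-defined and of class $C^3$ on $\Sigma_\lambda$. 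Introduce $w_\lambda := u - u_\lambda$ on $\overline{\Sigma_\lambda}$. Since both $u$ and $u_\lambda$ solve $\Delta \cdot + f(\cdot) = 0$, the difference satisfies the linear equation $\Delta w_\lambda + c_\lambda(x) w_\lambda = 0$ in $\Sigma_\lambda$, where $c_\lambda(x) = \big(f(u(x)) - f(u_\lambda(x))\big)/\big(u(x) - u_\lambda(x)\big)$ when $u \ne u_\lambda$ and $c_\lambda(x) = f'(u(x))$ otherwise; under conditions (i) or (ii) of Lemma \ref{le4} the function $f$ is $C^1$ up to $0$ (for (ii) one uses $f(0)<0$ as in the lemma's proof), and under (iii) $f$ is $C^1$ on $(0,\infty)$ and $u > 0$ in the relevant region, so in all cases $c_\lambda \in L^\infty_{\mathrm{loc}}$. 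The goal is to show $w_\lambda \ge 0$ throughout $\Sigma_\lambda$ for every $\lambda \ge \varepsilon$.

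First I would check the boundary behaviour of $w_\lambda$ on $\partial\Sigma_\lambda$. The boundary of $\Sigma_\lambda$ consists of three pieces: (a) the portion of $\mathcal{T}_\lambda$ outside $\overline{\Omega}$, where $x = x^\lambda$ and so $w_\lambda = 0$; (b) the portion of $\partial(\mathcal{R}_\lambda\overline{\Omega})$ inside $\mathcal{H}_\lambda$, i.e. the reflected obstacle boundary, where $u_\lambda = u\big|_{\partial\Omega} = 0$ by \eqref{2-5} while $u > 0$ (or $u \ge 0$, $=0$ only on $\partial\Omega$ which is disjoint from the interior of this piece unless $\Omega$ is a disk centered on $\mathcal{T}_\lambda$) since $x$ lies outside $\overline{\Omega}$, so $w_\lambda = u \ge 0$ there; and (c) if $\partial\Omega$ itself meets $\mathcal{H}_\lambda$, the portion of $\partial\Omega$ in $\mathcal{H}_\lambda \setminus \mathcal{R}_\lambda\overline{\Omega}$, where $u = 0$ and $u_\lambda \le 0$... but wait, $u_\lambda \ge 0$ there too since the reflected point is also exterior, so one must argue more carefully: actually on this piece $x \in \partial\Omega$ so $u(x) = 0$, and $x^\lambda \in \R^2\setminus\overline\Omega$ (as just noted $\mathcal R_\lambda$ maps $\mathcal H_\lambda\cap\overline\Omega$ into $\Omega$, so points of $\partial\Omega\cap\mathcal H_\lambda$ map into $\overline\Omega$, hence $u_\lambda(x)\le 0$ only if $x^\lambda\in\partial\Omega$; generically $x^\lambda$ is interior and $u_\lambda$ is not defined as a solution value — here one restricts $\Sigma_\lambda$ to exclude $\mathcal{R}_\lambda(\overline\Omega)$, and uses that near such boundary points $u$ vanishes to first order and Lemma \ref{le4} gives a sign). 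This is exactly the delicate overlap region and I would handle it by splitting the range of $\lambda$: for $\lambda$ large (namely $\lambda$ beyond the $x_1$-extent of $\overline\Omega$), case (c) is empty and $\partial\Omega$ plays no role, so $w_\lambda \ge 0$ on all of $\partial\Sigma_\lambda$.

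For the start of the procedure, fix a large $\lambda_0$ so that $\overline\Omega \subset \{x_1 < \lambda_0\}$; then for $\lambda \ge \lambda_0$ the domain $\Sigma_\lambda$ is a subset of $\mathcal{H}_\lambda$ disjoint from $\overline\Omega$, it is "narrow" in the sense relevant to the maximum principle only near $\mathcal{T}_\lambda$, but the real point is that $u \to +\infty$ at infinity by \eqref{2-5} while $u_\lambda$ stays bounded on bounded sets; more precisely, using the far-field asymptotics ($v^\theta$ bounded below, $v^r = o(1/|x|)$) one shows as in Lemma \ref{le2} / Lemma 3.1 of \cite{HN} that $u(x) - u(x^\lambda) \to$ a nonnegative limit, so $w_\lambda$ cannot have a negative infimum attained at infinity. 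Combined with $w_\lambda \ge 0$ on $\partial\Sigma_\lambda$ and the equation $\Delta w_\lambda + c_\lambda w_\lambda = 0$, an application of the maximum principle in the generalized form valid on (possibly unbounded, possibly narrow) domains — e.g. via the method of a positive supersolution or the result of Berestycki–Nirenberg–Varadhan, noting $\Sigma_\lambda$ has the requisite thinness near $\mathcal{T}_\lambda$ and the good behaviour at infinity — yields $w_\lambda \ge 0$, i.e. $u \ge u_\lambda$ on $\Sigma_\lambda$, for all $\lambda \ge \lambda_0$.

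Then I would decrease $\lambda$ and let $\lambda_* := \inf\{\mu \ge \varepsilon : w_\lambda \ge 0 \text{ in } \Sigma_\lambda \ \forall\, \lambda \in [\mu, \lambda_0]\}$, aiming to show $\lambda_* = \varepsilon$. Suppose $\lambda_* > \varepsilon$. By continuity $w_{\lambda_*} \ge 0$ in $\Sigma_{\lambda_*}$; by the strong maximum principle, either $w_{\lambda_*} \equiv 0$ in some component (excluded because on piece (b) of the boundary $w_{\lambda_*} = u > 0$ on a nonempty relatively open set once $\lambda_* > \varepsilon \ge 0$, unless $\Omega$ is already a disk centered on $\mathcal{T}_{\lambda_*}$, in which case we are done for this direction) or $w_{\lambda_*} > 0$ in the interior. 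In the latter case I would derive a contradiction with the definition of $\lambda_*$ by showing $w_\lambda \ge 0$ persists for $\lambda$ slightly below $\lambda_*$: this is the standard second step, combining (1) the strict positivity of $w_{\lambda_*}$ on compact subsets of the interior, (2) Hopf's lemma at the points of $\mathcal{T}_{\lambda_*} \cap \partial\Sigma_{\lambda_*}$ giving $\partial_{x_1} w_{\lambda_*} < 0$ there (so $w_\lambda > 0$ just below, in a neighborhood of that flat piece), (3) Lemma \ref{le4}, which supplies $\partial_\nu u > 0$ near $\partial\Omega$, to control the behaviour in the "new" sliver of domain created near the reflected obstacle when $\lambda$ drops — this is where the boundary-gradient estimate of Lemma \ref{le4} is indispensable because near $\partial\Omega$ the coefficient $c_\lambda$ may blow up and only a sign condition on $\partial_\nu u$ saves the comparison — and (4) the good behaviour at infinity again. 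Hence $\lambda_* = \varepsilon$, which is precisely the claim $u(x) \ge u(x^\lambda)$ for all $x \in \mathcal{H}_\lambda \setminus \mathcal{R}_\lambda(\overline\Omega)$ and all $\lambda \ge \varepsilon$.

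The main obstacle, as flagged above, is the analysis near $\partial\Omega$: the region $\Sigma_\lambda$ is an exterior-type domain with a non-smooth, partly-free boundary, the linearized coefficient $c_\lambda$ is not bounded up to $\partial\Omega$ under hypothesis (iii) (and is only barely bounded under (ii)), and the geometry of how $\partial\Omega$ and its reflection interact as $\lambda$ varies is subtle. The device for overcoming it is Lemma \ref{le4}: reducing, in a fixed neighborhood $B_\rho(z)$ of each boundary point, the comparison of $u$ and $u_\lambda$ to a comparison governed by the strict positivity of the normal derivative, thereby bypassing the need for a maximum principle with good coefficients right at the degenerate boundary. The unboundedness of the domain is the secondary obstacle, dealt with uniformly by the far-field conditions in (2) together with Lemma \ref{le2}, exactly as in \cite{HN}.
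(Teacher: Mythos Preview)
Your overall moving-planes strategy is correct and you invoke Lemma \ref{le4} in the right place, but the execution differs from the paper's in one essential respect and contains a few slips.

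The paper does \emph{not} work on the unbounded set $\mathcal{H}_\lambda \setminus \mathcal{R}_\lambda(\overline\Omega)$. Instead it invokes Lemma \ref{le2} to choose, for each $|z|\ge R_\varepsilon$, the bounded region $D_z$ enclosed by a far-out streamline; this $D_z$ satisfies $\mathcal{R}_\lambda(\mathcal{H}_\lambda\cap\overline{D_z})\subset D_z$ for all $\lambda>\varepsilon$, and $u$ is constant on $\partial D_z$. The moving planes are then run on the \emph{bounded} set $\Sigma_\lambda=(\mathcal{H}_\lambda\cap(D_z\setminus\overline\Omega))\setminus\mathcal{R}_\lambda(\overline\Omega)$, so the initial step is a routine small-measure maximum principle and no unbounded-domain maximum principle---nor your unsubstantiated claim that $u(x)-u(x^\lambda)$ tends to ``a nonnegative limit''---is needed. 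Letting $|z|\to\infty$ exhausts $\R^2\setminus\overline\Omega$. For the continuation step the paper follows Reichel: assuming $\lambda_*>\varepsilon$, pick $\lambda_k\nearrow\lambda_*$ and positive maxima $x^{(k)}$ of $\Phi_{\lambda_k}$; their limit $\bar x$ is forced onto $\mathcal{T}_{\lambda_*}\cap(\partial D_z\cup\partial\Omega)$, and Lemma \ref{le4} gives $\partial_1 u>0$ nearby, so integrating $\partial_1 u$ along the horizontal segment from $x^{(k)}$ to $x^{(k),\lambda_k}$ yields $\Phi_{\lambda_k}(x^{(k)})<0$, a contradiction obtained \emph{without} any maximum principle near the possibly singular boundary. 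Your ``persistence via strict positivity on compacts plus Hopf'' sketch does not make this bypass explicit.

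Two smaller points. First, your assertion that $f\in C^1[0,\infty)$ under hypothesis (ii) is incorrect: Lemma \ref{le1} gives this only under (i); under (ii) one knows merely $f(0)\ne 0$ and $f\in C^1(0,\infty)$, which is precisely why the integration device above (rather than a Hopf/maximum principle needing bounded $c_\lambda$) is used. Second, your ``piece (c)'' of $\partial\Sigma_\lambda$ is empty for every $\lambda>\varepsilon$: hypothesis \eqref{2-7} gives $\mathcal{H}_\lambda\cap\overline\Omega\subset\mathcal{R}_\lambda(\Omega)$, so $\partial\Omega\cap\mathcal{H}_\lambda$ already lies inside $\mathcal{R}_\lambda(\overline\Omega)$ and has been removed from $\Sigma_\lambda$; the confusion you flag there does not arise.
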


\begin{proof}
The proof is based on the method of moving planes developed in \cite{Ale, Bre, Gid, Se}. We adopt a similar approach to that presented in \cite{HN, Rei}.

The continuity of $u$ implies that it is sufficient to consider the case when $\varepsilon>0$. For $\lambda>\varepsilon$, let us introduce the comparison function
  \begin{equation}\label{2-39}
    \Phi_\lambda(x):=u(x^\lambda)-u(x),\ \ \ x\in \mathcal{H}_\lambda \backslash \mathcal{R}_\lambda(\overline{\Omega}),
  \end{equation}
  which is well-defined by virtue of \eqref{2-7}. Our task now is to show that for all $\lambda>\varepsilon$, there holds $\Phi_\lambda\le 0$ in $\mathcal{H}_\lambda \backslash \mathcal{R}_\lambda(\overline{\Omega})$. In view of Lemmas \ref{le1} and \ref{le2}, we have that
  \begin{equation*}
    \lim_{|x|\to +\infty} \min_{\R}|\xi_{x}|=+\infty.
  \end{equation*}
Moreover, there exists $R_\varepsilon>0$ such that
\begin{equation*}
  \mathcal{R}_\lambda \left(\mathcal{H}_\lambda \cap \overline{D_z} \right) \subset D_z
\end{equation*}
for all $\lambda>\varepsilon$ and $|z|\ge R_\varepsilon$. Clearly, it suffices to show that for all $|z|>R_\varepsilon$ and for all $\lambda>\varepsilon$, there holds
\begin{equation}\label{2-12}
  \Phi_\lambda \le 0\ \ \ \text{in}\ \left(\mathcal{H}_\lambda\cap (D_{z}\backslash \overline{\Omega})\right)\backslash \mathcal{R}_\lambda(\overline{\Omega}).
\end{equation}
We will maintain the assumption that $|z|>R_\varepsilon$ for the rest of the argument. To simplify the notation, let us denote
 \begin{equation*}
   \Sigma_\lambda=\left(\mathcal{H}_\lambda\cap (D_{z}\backslash \overline{\Omega})\right)\backslash \mathcal{R}_\lambda(\overline{\Omega}).
 \end{equation*}
 Call $M_0=\max_{x\in D_{z}}x_1>0$. Our plan is to show the following properties for $\Phi_\lambda$ for all $\lambda\in(\varepsilon, M_0)$:
\begin{align}
\label{2-13}  \Phi_\lambda\le 0 &\ \ \text{in}\ \Sigma_\lambda, \\
\label{2-14}  \partial_1 \Phi_\lambda <0 &\ \ \text{on}\ \mathcal{T}_\lambda\cap (D_{z}\backslash \overline{\Omega}).
\end{align}
This will be done by an initial step for $\lambda\in (M_0-\tau, M_0)$ with $\tau>0$ small and by a continuation step for all $\lambda\in(\varepsilon, M_0)$.

  Let us start with the \textbf{initial step}: Recalling \eqref{2-2}, we have that
  \begin{equation*}
    \Delta \Phi_\lambda+c_\lambda \Phi_\lambda =0\ \ \ \text{in}\ \overline{\mathcal{H}_\lambda \backslash \mathcal{R}_\lambda(\overline{\Omega})},
  \end{equation*}
  where
  \begin{equation}\label{2-40}
    c_\lambda(x)=\begin{cases}
                   \displaystyle\frac{f(u(x^\lambda))-f(u(x))}{u(x^\lambda)-u(x)}, & \mbox{if}\ \ u(x^\lambda)\not=u(x), \\
                   0, & \mbox{if}\  \ u(x^\lambda)=u(x).
                 \end{cases}
  \end{equation}
For $\lambda$ less than $M_0$ and close to $M_0$, we have that $\overline{\Sigma_\lambda \cup \mathcal{R}_\lambda (\Sigma_\lambda)} \subset \R^2 \backslash \overline{\Omega} $, and hence $c_\lambda\in L^\infty(\overline{\Sigma_\lambda})$. Notice that $\Phi_\lambda \le 0$ on $\partial \Sigma_\lambda$. By the maximum principle in sets with bounded diameter and small Lebesgue measure, and then from the strong maximum principle (see, e.g., \cite{Fra}), we get that $\Phi_\lambda<0$ in $\Sigma_\lambda$. Furthermore, considering that $\Phi_\lambda(x)=0$ on $\mathcal{T}_\lambda\cap (D_{z}\backslash \overline{\Omega})$, by the Hopf Lemma, we observe that $\partial_1\Phi_\lambda(x)=-2\partial_1u(x)<0$ for all $x\in\mathcal{T}_\lambda\cap (D_{z}\backslash \overline{\Omega})$.

\textbf{Continuation step}: By the initial step the following quantity $\lambda_*$ is well defined
\begin{equation*}
  \lambda_*=\inf\{\lambda\in (\varepsilon, M_0):\, \Phi_{\lambda'}<0\ \text{in}\ \Sigma_{\lambda'}\ \text{for all}\ \lambda'\in(\lambda, M_0)\}.
\end{equation*}
Moreover, by the maximum principle, we see that both \eqref{2-13} and \eqref{2-14} hold for all $\lambda\in (\lambda_*, M_0)$. Our intention is to show $\lambda_*=\varepsilon$. Suppose $0<\lambda_*<M_0$. By the assumption \eqref{2-7}, the $x_1$-direction is non-tangent on $\partial \Sigma_{\lambda_*}$; see Lemma A.1 in \cite{Ami}. Since $u$ is constant along $\partial D_z$ and $|\nabla u| \neq 0$ on $\partial D_z$, we have $\partial_1 u > 0$ on $\mathcal{T}_\lambda \cap \partial D_z$. There are sequences $\lambda_k \nearrow \lambda_*$ and $x^{(k)}\in \Sigma_{\lambda_k}$ such that $\Phi_{\lambda_k}$ attains its positive maximum over $\Sigma_{\lambda_k}$ in $x^{(k)}$ and $x^{(k)}\to \bar{x}$ with $\Phi_{\lambda_*}(\bar{x})=0$ and $\nabla \Phi_{\lambda_*}(\bar{x})=0$. Note that $\Phi_\lambda(x)<0$ for all $x\in\mathcal{H_{\lambda_*}}\cap \left(\partial D_z \cup \partial \mathcal{R}_{\lambda_*}(\Omega) \right)$. Combined this fact with the Hopf Lemma, we see that $\bar{x}\in \mathcal{T}_{\lambda_*}\cap (\partial D_z \cup \partial \Omega)$ and $x^{(k)}, x^{(k), \lambda_k}\to \bar{x}$. By Lemma \ref{le4}, in the vicinity of $\bar{x}$ we have $\partial_1u>0$. Integration of $\partial_1 u$ along straight lines from $x^{(k)}$ to $x^{(k), \lambda_k}$ yields $\Phi_{\lambda_k}(x^{(k)})<0$ for $k$ big enough, a contradiction. This shows that $\lambda_*=\varepsilon$, and thus completes the proof of this case.
\end{proof}

We have the following basic estimate.
\begin{lemma}\label{le7}
Let $\mv, u, f$ be as in Lemma \ref{le1}. Assume also that $|v|$ is constant on $\partial \Omega$  and the vorticity $\omega=\partial_1v_2-\partial_2v_1$ does not vanish on $\partial \Omega$. Let $\delta>0$ and $\Omega_\delta=\{x\in \R^2\backslash \overline{\Omega}: d(x)< \delta \}$, where $d(x)=d(x, \partial \Omega)=\min\{|x-p|: p\in\partial \Omega\}$. Then there exists $C>0$ such that
\begin{equation}\label{2-20}
  \left|f(u(x))-f(u(y))\right|\le \frac{C}{\min\{d(x), d(y)\}}|u(x)-u(y)|,\ \ \ \forall\, x, y\in \Omega_\delta.
\end{equation}
\end{lemma}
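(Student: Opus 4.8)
The plan is to deduce \eqref{2-20} from a quantitative estimate of the form $|f'(s)|=O(s^{-1/2})$ as $s\to0^+$, which comes from the vorticity equation combined with the fact that $u$ vanishes \emph{exactly} to second order on $\partial\Omega$. Write $\mv=\nabla^\perp u$, so that $\omega=\partial_1v_2-\partial_2v_1=\Delta u=-f(u)$ in $\R^2\backslash\overline\Omega$ and hence $\omega\equiv-f(0)$ on $\partial\Omega$ by continuity. If the (constant) value of $|\mv|$ on $\partial\Omega$ is nonzero, then $\{x\in\partial\Omega:|\mv(x)|=0\}=\varnothing$, so $f\in C^1[0,+\infty)$ by Lemma \ref{le1}; then $f$ is Lipschitz on $[0,\max_{\overline{\Omega_\delta}}u]$, and since $\min\{d(x),d(y)\}<\delta$ on $\Omega_\delta$ the bound \eqref{2-20} is immediate (with $C=\delta\,\mathrm{Lip}(f)$). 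So I may assume $|\mv|\equiv0$ on $\partial\Omega$, i.e.\ $\nabla u\equiv0$ on $\partial\Omega$ --- the degenerate situation the lemma is designed for.

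I would then describe $u$ near $\partial\Omega$. Since $u$ and $\nabla u$ both vanish on $\partial\Omega$, the second derivative of $u$ in the direction tangent to $\partial\Omega$ also vanishes on $\partial\Omega$ (differentiate $u\equiv0$ twice along the curve $\partial\Omega$ and use $\nabla u\equiv0$), whence $\partial_{nn}u=\Delta u=-f(0)$ there; a second-order Taylor expansion of $u$ about the nearest boundary point, together with $u>0$ in $\R^2\backslash\overline\Omega$, forces $-f(0)\ge0$, and the assumption that $\omega$ (equivalently $-f(0)$) does not vanish on $\partial\Omega$ then upgrades this to $m:=-f(0)>0$. Using a tubular neighbourhood of the compact $C^2$ curve $\partial\Omega$ (so that $x-\pi(x)=d(x)\,n(\pi(x))$ for the nearest-point projection $\pi$) and the uniform continuity of $\nabla^2u$, the same expansion gives, after shrinking $\delta$,
\begin{equation*}
  \frac{m}{4}\,d(x)^2\le u(x)\le\frac{3m}{4}\,d(x)^2\qquad\text{and}\qquad|\nabla u(x)|\ge\frac{m}{2}\,d(x)\qquad\text{for }x\text{ near }\partial\Omega,
\end{equation*}
and, with possibly different positive constants, on all of $\Omega_\delta$ (away from $\partial\Omega$ this is elementary, since there $u$ and $|\nabla u|$ are bounded between positive constants while $d\le\delta$). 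Shrinking $\delta$ again I may also assume $\Omega_\delta$ is connected, being a one-sided collar of $\partial\Omega$.

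The heart of the matter is the bound on $f'$. Differentiating $\omega=-f(u)$ in $\R^2\backslash\overline\Omega$ gives $\nabla\omega=-f'(u)\nabla u$; since $\nabla u\ne0$ there, $f'(u(x))=-\nabla\omega(x)\cdot\nabla u(x)/|\nabla u(x)|^2$, and therefore
\begin{equation*}
  |f'(u(x))|\,|\nabla u(x)|\le|\nabla\omega(x)|\le M:=\max_{\overline{\Omega_\delta}}|\nabla\omega|<+\infty,\qquad x\in\Omega_\delta .
\end{equation*}
Combined with $|\nabla u(x)|\gtrsim d(x)$ and $d(x)^2\gtrsim u(x)$ this yields $|f'(u(x))|\le A\,u(x)^{-1/2}$ on $\Omega_\delta$ for a suitable $A>0$; and since $\Omega_\delta$ is connected, the intermediate value theorem applied along a path shows $|f'(s)|\le A\,s^{-1/2}$ for every value $s$ lying between $u(x)$ and $u(y)$, for any $x,y\in\Omega_\delta$. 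Integrating, for $u(y)\le u(x)$,
\begin{equation*}
  |f(u(x))-f(u(y))|\le\int_{u(y)}^{u(x)}\frac{A}{\sqrt s}\,\d s=2A\bigl(\sqrt{u(x)}-\sqrt{u(y)}\bigr)\le\frac{A\,(u(x)-u(y))}{\sqrt{u(y)}},
\end{equation*}
using the elementary inequality $\sqrt b-\sqrt a\le(b-a)/(2\sqrt a)$ for $0<a\le b$; since $\sqrt{u(y)}\gtrsim d(y)\ge\min\{d(x),d(y)\}$, this is exactly \eqref{2-20}. The case $u(x)\le u(y)$ is symmetric and the case $u(x)=u(y)$ is trivial.

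I expect the main obstacle to be converting the mere $C^1(0,+\infty)$ regularity of $f$ into the quantitative blow-up rate $|f'(s)|=O(s^{-1/2})$. This is possible only because $u$ degenerates \emph{exactly} quadratically at $\partial\Omega$, which is precisely where the two hypotheses are spent: constancy of $|\mv|$ forces $\nabla u\equiv0$ on $\partial\Omega$, while non-vanishing of $\omega$ on $\partial\Omega$ supplies the nondegeneracy $f(0)<0$ needed both for $u\gtrsim d^2$ and for $|\nabla u|\gtrsim d$. Once these comparisons and the identity $|f'(u)|\,|\nabla u|\le|\nabla\omega|$ are in hand, the remainder is routine.
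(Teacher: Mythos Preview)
Your argument is correct and matches the approach the paper has in mind: the paper omits the proof, referring to Lemma~5.2 of \cite{Ruiz}, which proceeds exactly as you do (the hypotheses force $\nabla u=0$ and $\partial_{nn}u=-f(0)\neq 0$ on $\partial\Omega$, hence $u\asymp d^2$ and $|\nabla u|\asymp d$ nearby; then $|f'(u)|\,|\nabla u|=|\nabla\omega|$ yields $|f'(s)|\lesssim s^{-1/2}$, and one integrates). The only cosmetic point is that shrinking $\delta$ to secure connectedness is unnecessary---$\Omega_\delta$ is connected for every $\delta>0$, since any $x\in\Omega_\delta$ is joined to the thin collar $\Omega_{\delta_0}$ by the segment toward its nearest boundary point, along which $d(\cdot)$ decreases---and in any case the bound $|f'(s)|\le A s^{-1/2}$ extends from small $s$ to all of $(0,\sup_{\Omega_\delta}u]$ simply because $f\in C^1(0,+\infty)$.
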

The proof of Lemma \ref{le7} is straightforward. In fact, it closely resembles the proof of Lemma 5.2 in \cite{Ruiz}, thus it is omitted.

The following two results are attributed to Ruiz \cite{Ruiz}. The first is the Hopf lemma for singular operators.
\begin{lemma}[\cite{Ruiz}, Proposition 4.3]\label{le8}
  Let $N\ge 2$ be an integer. Let $D\subset \R^N$ be a $C^2$ domain, and $c:D\to \R$ satisfy that $c(x)d(x)\in L^\infty(D)$, where $d(x)=d(x, \partial D)=\min\{|x-p|: p\in\partial D\}$. Let $B_r\subset D$ be a ball of radius $r>0$, and $\psi\in C^1(\overline{B_r})$ solving
  \begin{equation*}
    -\Delta \psi +c(x)\psi \ge 0
  \end{equation*}
  in a weak sense, that is, for any $\varphi\in H_0^1(D)$, $\varphi\ge 0$,
  \begin{equation*}
    \int_D \nabla \psi \cdot \nabla \varphi+c(x)\psi \varphi \ge 0.
  \end{equation*}
  Let $\nu$ be the outward unit normal on $\partial D$.
  Assume that
  \begin{itemize}
    \item [(1)] $\psi\ge 0$ in $B_r$;
    \item [(2)] $\psi(p)=0$ for some $p\in \partial B_r$.
  \end{itemize}
  Then
  \begin{equation*}
    \text{either}\ \ \partial_\nu \psi(p)<0\ \ \text{or}\ \ \psi=0\ \text{in}\ B_r.
  \end{equation*}
\end{lemma}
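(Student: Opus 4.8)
The plan is to follow the classical barrier–and–comparison proof of Hopf's lemma, the one new ingredient being a comparison principle for $-\Delta+c$ on a thin annulus abutting $p$, which is exactly where the hypothesis $c\,d\in L^\infty(D)$ enters. First I would dispose of the trivial alternative: on compact subsets of $B_r\Subset D$ the distance $d(\cdot,\partial D)$ is bounded below, so $c\in L^\infty_{\mathrm{loc}}(B_r)$, and the usual strong maximum principle inside $B_r$ gives the dichotomy that either $\psi\equiv0$ in $B_r$ (the second alternative) or $\psi>0$ throughout $B_r$. So assume $\psi>0$ in $B_r=B_r(x_0)$. Since $B_r\subset D$ and $p\in\partial B_r\cap\partial D$ (so that $\partial_\nu$ is meaningful, the ball being internally tangent to $\partial D$ at $p$), one has $d(x,\partial D)\ge r-|x-x_0|=:t(x)$ for all $x\in B_r$; writing $C_0:=\|c\,d\|_{L^\infty(D)}$ this gives the one-sided rate $|c(x)|\le C_0/t(x)$ in $B_r$, which is all we shall use.

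Second, I would construct a barrier on $A:=B_r(x_0)\setminus\overline{B_\rho(x_0)}$ with $\eta:=r-\rho$ to be fixed small. Seeking $w$ radial, $w=g(|x-x_0|)$ with $g(r)=0<g$ on $[\rho,r)$, the requirement $-\Delta w+cw\le0$ in $A$ reduces — bounding $cw\le(C_0/t)w$, legitimate since $w\ge0$ — to an ODE inequality $g''+\tfrac{N-1}{s}g'\ge\tfrac{C_0}{r-s}g$ on $(\rho,r)$. This is satisfied, for instance, by $w(x)=e^{-\alpha|x-x_0|^2}-e^{-\alpha r^2}$ once $\alpha$ is large (using $\tfrac{e^{-\alpha s^2}-e^{-\alpha r^2}}{r-s}\le 2\alpha r\,e^{-\alpha s^2}$, the requirement becomes $2\alpha s^2-N\ge C_0r$, valid for $\alpha\ge(N+C_0r)/(2\rho^2)$), or by a polynomial barrier $w=a(r-|x-x_0|)+b(r-|x-x_0|)^{3/2}$ on a thin enough annulus. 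What matters is that $w$ vanishes exactly linearly at $p$, so that $(C_0/t)w$ stays bounded near $p$ while $\Delta w$ can be made as large as needed there; in particular $\partial_\nu w(p)<0$.

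Third — and this is the heart of the matter — I would establish: if $\eta$ is small enough, then every $z\in C^1(\overline A)$ with $-\Delta z+cz\ge0$ weakly in $A$ and $z\ge0$ on $\partial A$ satisfies $z\ge0$ in $A$. Testing the weak inequality with $\varphi=z^-\in H^1_0(A)$ (note $z^-=0$ on $\partial A$) gives $\int_A|\nabla z^-|^2+c(z^-)^2\le0$. The one-dimensional Hardy inequality in the radial variable, applicable because $z^-$ vanishes on $\partial B_r$ (where $t=0$), together with the boundedness of the Jacobian on $A$, yields $\int_A(z^-)^2/t^2\le C_H\int_A|\nabla z^-|^2$ with $C_H$ dimensional. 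Hence $\int_A|c|(z^-)^2\le C_0\int_A(z^-)^2/t\le C_0\eta\int_A(z^-)^2/t^2\le C_0\eta C_H\int_A|\nabla z^-|^2$, so for $\eta<(C_0C_H)^{-1}$ one gets $0\ge\int_A|\nabla z^-|^2+c(z^-)^2\ge(1-C_0\eta C_H)\int_A|\nabla z^-|^2$, forcing $z^-\equiv0$; all integrals converge since $z^-/t\in L^2(A)$ by Hardy.

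Finally I would combine the pieces: fix $\eta$ small for the comparison step and then $\alpha$ (or $a,b$) for the barrier, set $m:=\min_{\partial B_\rho}\psi>0$, $\kappa:=m/g(\eta)>0$, and $z:=\psi-\kappa w$. Subtracting $\kappa$ times the (classical) differential inequality for $w$ from the weak inequality for $\psi$ shows $-\Delta z+cz\ge0$ weakly in $A$, while $z=\psi-m\ge0$ on $\partial B_\rho$ and $z=\psi\ge0$ on $\partial B_r$; the comparison step then gives $z\ge0$ in $A$. Since $z(p)=\psi(p)-\kappa w(p)=0$ and $z\ge0$ in $A$ with $z\in C^1$ up to $p$, we get $\partial_\nu z(p)\le0$, whence $\partial_\nu\psi(p)=\partial_\nu z(p)+\kappa\,\partial_\nu w(p)\le\kappa\,\partial_\nu w(p)<0$ — the first alternative. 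I expect the comparison step of the third paragraph to be the only genuine obstacle: the singularity of $c$ at $p$ destroys both the classical boundary maximum principle and any barrier with nondegenerate normal derivative unless one localizes to a thin shell where the $C_0/d$ potential is absorbed by Hardy's inequality, and the sole delicate bookkeeping is balancing "thin enough for Hardy" against "barrier still valid", which the construction above does.
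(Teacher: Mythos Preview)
The paper does not supply its own proof of this lemma; it is quoted from Ruiz \cite{Ruiz}, Proposition~4.3, and used as a black box. Your argument is a correct, self-contained proof and follows exactly the strategy one expects (and, in fact, the one Ruiz employs): the classical Hopf barrier on an annulus, together with a Hardy-inequality comparison principle on a thin shell to absorb the $C_0/d$ singularity of the potential.

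Two minor bookkeeping points, neither of which affects the validity of the scheme. First, where you write $\kappa=m/g(\eta)$ you presumably mean $\kappa=m/g(\rho)$, the value of the barrier on the inner sphere $\partial B_\rho$. Second, the Hardy constant $C_H$ is not purely dimensional: passing from the one-dimensional inequality to the annulus picks up the Jacobian factor $(r/\rho)^{N-1}$, so $C_H=4(r/\rho)^{N-1}$. This causes no difficulty, since $C_H\to4$ as $\eta\to0$, and hence the smallness requirement $\eta<(C_0C_H)^{-1}$ can indeed be met; just be aware that $C_H$ depends mildly on $\eta$ when you fix the order of choices.
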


The next lemma is the Serrin corner lemma for singular operators.
\begin{lemma}[\cite{Ruiz}, Proposition 4.4]\label{le9}
   Let $N\ge 2$ be an integer. Let $D\subset \R^N$ be a $C^2$ domain, and $c:D\to \R$ satisfy that $c(x)d(x)\in L^\infty(D)$, where $d(x)=d(x, \partial D)=\min\{|x-p|: p\in\partial D\}$. Let $B_r\subset D$ be a ball of radius $r>0$, and $B_r^+$ a half ball. We can assume, without loss of generality, that
   \begin{equation*}
     B_r^+=\left\{x\in\R^N: |x|<r,\ x_1>0 \right\}.
   \end{equation*}
   Let $\psi\in C^2(\overline{B^+_r})$ be a weak solution of the inequality
   \begin{equation*}
      -\Delta \psi +c(x)\psi \ge 0.
   \end{equation*}
   Assume that
   \begin{itemize}
     \item [(1)] $\psi \ge 0$ in $B_r^+$;
     \item [(2)]$\psi(p)=0$ for some $p\in \partial B_r \cap \{x_1=0\}$;
     \item [(3)]$\nabla \psi(p)=0$.
   \end{itemize}
     Then
  \begin{equation*}
    \text{either}\ \ \frac{\partial^2 \psi}{\partial \eta^2}(p)>0\ \ \text{or}\ \ \psi=0\ \text{in}\ B_r^+,
  \end{equation*}
  where $\eta\in \R^N$ is any unit vector with $\eta_1>0$, $p\cdot \eta<0$.
\end{lemma}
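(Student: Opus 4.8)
The plan is to run Serrin's corner-lemma argument \cite{Se} for the \emph{singular} operator $L:=-\Delta+c$, in the spirit of the singular Hopf lemma (Lemma~\ref{le8}): I would build an explicit $C^2$ subsolution $w$ of $L$ on a thin curvilinear wedge $\mathcal W$ with vertex at $p$, having a double zero at $p$ and strictly positive second derivative along $\eta$, and then compare $w$ with $\psi$. The barrier will decay at $\partial D$ at a rate compatible with $cd\in L^\infty(D)$, so the singular coefficient does no harm.

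First I would reduce. If $\psi\not\equiv 0$, then $c$ is locally bounded in the open half-ball $B_r^+$ (every compact subset of $B_r^+$ stays at positive distance from $\partial D$, since $\overline{B_r}\cap\partial D\subseteq\partial B_r$), so the strong maximum principle gives $\psi>0$ in $B_r^+$. Since $\psi\in C^2$ with $\psi(p)=0$ and $\nabla\psi(p)=0$, the Taylor expansion $\psi(p+t\eta)=\tfrac12 t^2\,\partial^2_{\eta\eta}\psi(p)+o(t^2)$ shows it suffices to find $w\in C^2(\overline{\mathcal W})$, where $\mathcal W\subset B_r^+$ is a wedge with $p\in\partial\mathcal W$, such that (a) $Lw\le 0$ in $\mathcal W$, (b) $w\le\psi$ on $\partial\mathcal W$, and (c) $w(p)=0$, $\nabla w(p)=0$, $\partial^2_{\eta\eta}w(p)>0$. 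Indeed, (a)--(b) and the weak comparison principle for $L$ on the small set $\mathcal W$ --- valid because $cd\in L^\infty$ and $\psi-w$ vanishes to second order at $\partial D\cap\overline{\mathcal W}\subseteq\{p\}$, so that $c(\psi-w)$ stays bounded on $\mathcal W$ and the ABP / small-measure maximum principle applies (cf.\ the arguments behind Lemma~\ref{le8}) --- give $w\le\psi$ in $\mathcal W$, and then (c) forces $\partial^2_{\eta\eta}\psi(p)\ge\partial^2_{\eta\eta}w(p)>0$.

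For the barrier I would use the corner geometry: since the centre of $B_r$ lies on $\{x_1=0\}$, the flat face $\{x_1=0\}$ and the spherical face $\partial B_r$ meet orthogonally at $p$. With $\delta_1(x):=x_1$ and $\delta_2(x):=(r^2-|x|^2)/(2r)$ one has $\delta_1,\delta_2>0$ in $B_r^+$, $\delta_i(p)=0$, $\nabla\delta_1(p)=e_1$, $\nabla\delta_2(p)=-p/r$, and the hypotheses $\eta_1>0$ and $\eta\cdot p<0$ say precisely that $\eta\cdot\nabla\delta_1(p)>0$ and $\eta\cdot\nabla\delta_2(p)>0$. Following Serrin, I would take $\mathcal W$ to be a thin curvilinear wedge of small diameter $\rho$ at $p$, bounded by the two faces and one auxiliary spherical arc, and a barrier of the schematic form $w=\delta_1\delta_2\,e^{\mu\Theta}-\beta\chi$, where $\Theta\in C^\infty$ vanishes to second order at $p$ with prescribed large $\eta$-Hessian there, $\chi\ge 0$ is a higher-order correction, and $\mu,\beta>0$ and $\rho>0$ are tuned at the end. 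Then $w\le 0$ on the two faces (so (b) holds there because $\psi\ge 0$), $w(p)=0$, $\nabla w(p)=0$, and $\partial^2_{\eta\eta}w(p)=2\,(\eta\cdot\nabla\delta_1(p))(\eta\cdot\nabla\delta_2(p))>0$; on the auxiliary arc $w=O(\rho^2)$ while $\psi$ is bounded below, so (b) holds there after multiplying $w$ by a small constant and shrinking $\rho$. The remaining inequality (a) is the delicate one: since $d(x)\ge r-|x|\ge\delta_2(x)$ in $B_r$, the barrier vanishes at $\partial D$ at least as fast as $d$, so $|cw|\le\|cd\|_{L^\infty}\,|w|/d$ is bounded on $\mathcal W$; meanwhile the exponential weight makes $\Delta(\delta_1\delta_2 e^{\mu\Theta})$ as large as we please for $\mu$ large, so choosing $\mu$ large, then $\rho$ small, then $\beta$ small should yield $Lw\le 0$.

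The hard part is precisely this barrier: producing a single $C^2$ function that is simultaneously (i) a \emph{strict} subsolution of the \emph{singular} operator $L$, (ii) nonpositive on both faces of the corner, and (iii) a genuine double zero at the vertex with strictly positive $\eta$-curvature --- the tension being that (i) pushes $w$ to vanish quickly near $\partial D$ while (iii) forbids it from vanishing beyond second order at $p$. Pinning down the correction term $\chi$ and the precise shape of $\mathcal W$ so that all three hold simultaneously, together with the comparison principle for $L$ on $\mathcal W$, is exactly what is carried out in \cite[Proposition~4.4]{Ruiz}; this is where essentially all the work lies, and it is the reason the hypothesis reads $cd\in L^\infty$ rather than $c\in L^\infty$. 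An alternative I would also explore is to deduce the statement from Lemma~\ref{le8} applied to a shrinking family of interior balls clustering at $p$, but extracting the required second-order information that way looks no easier.
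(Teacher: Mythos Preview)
The paper does not prove this lemma at all: it is quoted as Proposition~4.4 of \cite{Ruiz} and used as a black box in Sections~\ref{s2.2} and~\ref{s2.3}. There is therefore no in-paper argument to compare your proposal against; in fact your sketch already goes well beyond what the paper provides.

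That said, your outline is a faithful summary of the expected strategy (and, as you yourself note, essentially the one carried out in \cite{Ruiz}): reduce to $\psi>0$ in $B_r^+$ via the strong maximum principle, build a Serrin-type corner barrier $w\sim\delta_1\delta_2$ on a small wedge at $p$, and exploit $d(x)\ge r-|x|\ge\delta_2(x)$ so that $|cw|\lesssim\|cd\|_{L^\infty}$ remains bounded despite the singularity of $c$. Your candid acknowledgment that the barrier construction is where the real work lies, and that you are deferring to \cite{Ruiz} for it, is entirely consistent with how the paper treats this result. One minor caveat: your justification of the comparison principle on $\mathcal W$ (``$\psi-w$ vanishes to second order at $\partial D\cap\overline{\mathcal W}\subseteq\{p\}$, so $c(\psi-w)$ stays bounded'') is not quite the right mechanism, since second-order vanishing at the single point $p$ does not by itself control $c(\psi-w)$ throughout $\mathcal W$; what actually matters is that \emph{both} $\psi$ and $w$ separately satisfy $|c\psi|,|cw|\lesssim\|cd\|_{L^\infty}/d\cdot d=O(1)$ near the spherical face (using $\psi\in C^2$ with $\psi,\nabla\psi$ vanishing at $p$, and the explicit form of $w$), after which a small-measure maximum principle applies. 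This is a detail rather than a gap, and it is precisely the sort of thing handled in \cite{Ruiz}.
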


\subsection{Proof of Theorem \ref{th3}}\label{s2.2}
This subsection is devoted to the proof of Theorem \ref{th3}. Let $\Omega\subset \R^2$ be a simply connected bounded domain of class $C^2$ and $n$ be the outward unit normal on $\partial \Omega$. Let $\mathbf{v}$ be a $C^2({\R^2\backslash\Omega})$ flow solving \eqref{1-1}. Assume also that:
  \begin{itemize}
    \item [(1)]$|\mathbf{v}|>0$ in $\R^2\backslash \overline{\Omega}$ and $|\mathbf{v}|$ is a nonzero constant on $\partial \Omega$.
    \item [(2)]The following far-field conditions hold
    \begin{equation}\label{2-20}
      \liminf_{|x|\to +\infty}|\mathbf{v}(x)|>0\ \ \ \text{and}\ \ \ \mathbf{v}(x)\cdot \mathbf{e}_r(x)=o\left(\frac{1}{|x|} \right)\ \text{as}\ |x|\to +\infty.
    \end{equation}
  \end{itemize}
  Our goal is to show that $\mathbf{v}$ is a circular flow with respect to the origin. In fact, if $\mv$ is a circular flow with respect to some point $z\in \R^2$, then $z$ must be the origin due to the far-field conditions \eqref{2-20}. Moreover, given the rotational invariance of the problem at hand, establishing radial symmetry only necessitates demonstrating symmetry with respect to one direction, for instance, the $x_1$-direction. In addition, we may assume, without loss of generality, that
      \begin{equation*}
      \liminf_{|x|\to +\infty}v^\theta(x)>0\ \ \ \text{and}\ \ \ v^r(x)=o\left(\frac{1}{|x|} \right)\ \text{as}\ |x|\to +\infty
    \end{equation*}
   after possibly changing $\mv$ into $-\mv$. Therefore, the conditions of Lemma \ref{le1} are fulfilled in this case. In particular, the nonlinearity $f$ in \eqref{2-2} now belongs to the class $C^1[0, +\infty)$.

\textbf{Step 1:} Set $R_0=\max_{x\in \partial \Omega} x_1$. It is well known (see Lemma A.1 in \cite{Ami}) that for values of $\lambda$ a little less than $R_0$ the reflection of $\mathcal{H}_\lambda\cap \overline{\Omega}$ is lying in $\Omega$ and the $x_1$-direction is external at every point of $\mathcal{H}_\lambda \cap \partial \Omega$. Moreover, this stays true for decreasing values of $\lambda$ until one of the following alternatives happens:
  \begin{itemize}
    \item [(i)]\textit{Internal tangency}. There exists $p\in \mathcal{H}_\lambda \cap \partial \Omega$ such that $\mathcal{R}_\lambda(p)\in \partial \Omega$.
    \item [(ii)]\textit{Orthogonality of $\partial \Omega$ and $\mathcal{T}_\lambda$}. There exists $p\in \partial \Omega\cap \mathcal{T}_\lambda$ such that $n_1(p)=0$.
  \end{itemize}
We denote this critical value of $\lambda$ by $\mu$. Without loss of generality, we may assume $\mu \geq 0$ by considering the $-x_1$-direction instead of the $x_1$-direction if necessary. Hence, there holds
\begin{equation*}
    \mathcal{R}_\lambda\left(\mathcal{H}_\lambda \cap \overline{\Omega} \right) \subset \Omega\ \ \ \text{for all}\ \lambda>\mu.
  \end{equation*}
According to Lemma \ref{le5}, we deduce that $\Phi_\mu \le 0$ in $\mathcal{H}_\mu \backslash \mathcal{R}_\mu(\overline{\Omega})$, where $\Phi_\mu$ is the comparison function defined by \eqref{2-39}. We remark that in order to establish the $x_1$-symmetry of $\Omega$, it suffices to show that $\Phi_\mu\equiv 0$ on a component $Z$ of $\mathcal{H}_\mu \backslash \mathcal{R}_\mu(\overline{\Omega})$. For a detailed proof of this fact, please refer to pages 385-386 in \cite{Rei2}.

\textbf{Step 2:} In this step, the Neumann boundary condition will play an important role. We shall adopt the same procedure as in the proof of Theorem 5.1 in \cite{Ruiz}.

\textit{Case 1: Internal tangency.} Assume that we have internal tangency at a point $p\in \mathcal{H}_\lambda \cap \partial \Omega$, for instance. By the overdetermined boundary condition,
\begin{equation}\label{3-20}
  \partial_n \Phi_{\mu}(p)=0.
\end{equation}
Due to the $C^2$ regularity of the domain, we are able to select an open disk $B_r(z)$ in $\mathbb{R}^2 \setminus \overline{\Omega}$, tangent to $\partial\Omega$ at the point $p$. We can shrink that disk such that $B_r(z) \subset \mathcal{H}_\mu \backslash \mathcal{R}_\mu(\overline{\Omega})$. In other words, $B_r(z) \cap \left( \overline{\Omega}\cup \mathcal{R}_\mu(\overline{\Omega}) \right)=\varnothing$. Recall that
  \begin{equation}\label{4-1}
    \Delta \Phi_\mu+c_\mu \Phi_\mu =0\ \ \ \text{in}\ \overline{\mathcal{H}_\mu \backslash \mathcal{R}_\mu(\overline{\Omega})},
  \end{equation}
  where $c_\mu\in L^\infty(B_r(z))$ is defined by \eqref{2-40}. We proceed by applying Lemma \ref{le8} to the domain $B_r(z)$, along with \eqref{3-20}. This allows us to infer initially that $\Phi_\mu$ is identically zero in $B_r(z)$, and subsequently, $\Phi_\mu$ remains identically zero in the component $Z$ of $\mathcal{H}_\mu \backslash \mathcal{R}_\mu(\overline{\Omega})$ containing $B_r(z)$. This implies that $\Omega$ is symmetric with respect to $\mathcal{T_\mu}$.

\textit{Case 2: Orthogonality of $\partial \Omega$ and $\mathcal{T}_\mu$.} Let us consider the case of a certain point $p\in \mathcal{T}_\mu \cap \partial \Omega $ with $n_1(p)=0$. Clearly, it holds that $\nabla \Phi_\mu (p)=0$. Reasoning as in pages 389-391 in \cite{Rei2} (see also \cite{Rei}), we conclude that the second order derivatives of $\Phi_\mu$ at $p$ are zero:
\begin{equation}\label{3-23}
  D^2 \Phi_\mu (p)=0.
\end{equation}
As in the previous case, we can take an open disk $B_r(z)$ in $\mathbb{R}^2 \setminus \overline{\Omega}$, tangent to $\partial\Omega$ at the point $p$. We can shrink $B_r(z)$ such that $B_r(z) \cap \left( \overline{\Omega}\cup \mathcal{R}_\mu(\overline{\Omega}) \right)=\varnothing$. We now apply Lemma \ref{le9} to the domain $B_r(z)$ and $B_r^+(z)$: taking into account \eqref{3-23} we conclude that $\Phi_\mu\equiv 0$ in $B_r^+(z)$. As in the previous case, we can further deduce that $\Phi_\mu$ remains identically zero in the component $Z$ of $\mathcal{H}_\mu \backslash \mathcal{R}_\mu(\overline{\Omega})$ containing $B_r(z)$, and hence $\Omega$ is symmetric with respect to $\mathcal{T_\mu}$.

In both cases, we conclude that $\Omega$ is symmetric with respect to the $x_1$-direction. The proof of Theorem \ref{th3} is thus complete.

\subsection{Proof of Theorem \ref{th2}}\label{s2.3}

This subsection is devoted to the proof of Theorem \ref{th2}. Let $\mathbf{v}$ be a $C^2({\R^2\backslash\Omega})$ flow solving \eqref{1-1}. Assume also that:
  \begin{itemize}
    \item [(1)]$|\mathbf{v}|>0$ in $\R^2\backslash \overline{\Omega}$ and $|\mv|=0$ on $\partial \Omega$.
       \item [(2)]The following far-field conditions hold
    \begin{equation*}
      \liminf_{|x|\to +\infty}|\mathbf{v}(x)|>0\ \ \ \text{and}\ \ \ \mathbf{v}(x)\cdot \mathbf{e}_r(x)=o\left(\frac{1}{|x|} \right)\ \text{as}\ |x|\to +\infty.
    \end{equation*}
    \item [(3)]The vorticity $\omega=\partial_1v_2-\partial_2 v_1$ does not vanish on $\partial \Omega$.
  \end{itemize}
  Our goal is to show that $\mathbf{v}$ is a circular flow with respect to the origin. The proof follows the same argument as in the previous subsection, so we will be sketchy. Firstly, Step 1 in the proof of Theorem \ref{th3} clearly remains valid. The only aspect requiring appropriate modification is Step 2, mainly because $c_\mu$ in \eqref{4-1} is no longer in $L^\infty$. Nevertheless, we notice that Lemmas \ref{le8} and \ref{le9} only require that $c(x)d(x)\in L^\infty$. Therefore, we make the following modifications:

\textit{Case 1: Internal tangency.} Assume that we have internal tangency at a point $p\in \mathcal{H}_\lambda \cap \partial \Omega$, for instance. By the overdetermined boundary condition,
\begin{equation}\label{4-20}
  \partial_n \Phi_{\mu}(p)=0.
\end{equation}
Due to the $C^2$ regularity of the domain, we are able to select an open disk $B_r(z)$ in $\mathbb{R}^2 \setminus \overline{\Omega}$, tangent to $\partial\Omega$ at the point $p$. We can shrink that disk such that $B_r (z)\subset \mathcal{H}_\mu \backslash \mathcal{R}_\mu(\overline{\Omega})$. In other words, $B_r(z) \cap \left( \overline{\Omega}\cup \mathcal{R}_\mu(\overline{\Omega}) \right)=\varnothing$. From this, we have that
\begin{equation}\label{4-21}
  d(x, \partial \Omega)\ge r-|x|,\ \ \ d(x, \partial \Omega)\ge r-|x|\ \ \ \text{for any}\ x\in B_r(z).
\end{equation}
Recall that
  \begin{equation*}
  \Delta \Phi_\mu+c_\mu \Phi_\mu =0\ \ \ \text{in}\ \overline{\mathcal{H}_\mu \backslash \mathcal{R}_\mu(\overline{\Omega})},
  \end{equation*}
  where $c_\mu$ is defined by \eqref{2-40}. Combining \eqref{4-21} and Lemma \ref{le7}, we conclude that
\begin{equation}\label{4-22}
  |c_\mu(x)|\le \frac{C}{r-|x|}\ \ \ \text{in}\ B_r(z).
\end{equation}
We proceed by applying Lemma \ref{le8} to the domain $B_r(z)$, along with \eqref{4-20}. This allows us to infer initially that $\Phi_\mu$ is identically zero in $B_r(z)$, and subsequently, $\Phi_\mu$ remains identically zero in the component $Z$ of $\mathcal{H}_\mu \backslash \mathcal{R}_\mu(\overline{\Omega})$ containing $B_r(z)$. This implies that $\Omega$ is symmetric with respect to $\mathcal{T_\mu}$.

\textit{Case 2: Orthogonality of $\partial \Omega$ and $\mathcal{T}_\mu$.} Let us consider the case of a certain point $p\in \mathcal{T}_\mu \cap \partial \Omega $ with $n_1(p)=0$. Clearly, it holds that $\nabla \Phi_\mu (p)=0$. Reasoning as in pages 389-391 in \cite{Rei2} (see also \cite{Rei}), we conclude that the second order derivatives of $\Phi_\mu$ at $p$ are zero:
\begin{equation}\label{4-23}
  D^2 \Phi_\mu (p)=0.
\end{equation}
As in the previous case, we can take an open disk $B_r(z)$ in $\mathbb{R}^2 \setminus \overline{\Omega}$, tangent to $\partial\Omega$ at the point $p$. We can shrink $B_r(z)$ such that $B_r(z) \cap \left( \overline{\Omega}\cup \mathcal{R}_\mu(\overline{\Omega}) \right)=\varnothing$. As a consequence, also here \eqref{4-21} is satisfied, and then the estimate \eqref{4-22} holds. We now apply Lemma \ref{le9} to the domain $B_r(z)$ and $B_r^+(z)$: taking into account \eqref{4-23} we conclude that $\Phi_\mu\equiv 0$ in $B_r^+(z)$. As in the previous case, we can further deduce that $\Phi_\mu$ remains identically zero in the component $Z$ of $\mathcal{H}_\mu \backslash \mathcal{R}_\mu(\overline{\Omega})$ containing $B_r(z)$, and hence $\Omega$ is symmetric with respect to $\mathcal{T_\mu}$.

In both cases, we conclude that $\Omega$ is symmetric with respect to the $x_1$-direction. The proof of Theorem \ref{th2} is thus complete.

\subsection{Proof of Theorem \ref{th1}}\label{s2.4}
In this subsection, we present the proof of Theorem \ref{th1}. Let $\mathbf{v}$ be a $C^2(\R^2)$ flow solving \eqref{1-5}. Assume also that:
  \begin{itemize}
    \item [(1)]$\mathcal{S}=B_R$ for some $R>0$.
  \item [(2)]The following far-field conditions hold
    \begin{equation*}
          \liminf_{|x|\to +\infty}v^\theta(x)>0\ \ \ \text{and}\ \ \ v^r(x)=o\left(\frac{1}{|x|} \right)\ \text{as}\ |x|\to +\infty.
    \end{equation*}
       \item [(3)]There exists a $\delta>0$ such that $v^\theta(x)>0$ for all $R<|x|<R+\delta$, and $v^r(x)=o\left(v^\theta(x)\right)$ as $|x|\to R^+$.
  \end{itemize}
  Our goal is to show that $\mathbf{v}$ is a circular flow with respect to the origin. It suffices to show that the stream function $u$ is radially symmetric. Since the problem under consideration is rotationally invariant, it suffices for the proof of radial symmetry to establish the symmetry with respect to one coordinate axis, e.g., the  $x_1$-axis. By Lemma \ref{le5}, we have that
  \begin{equation}\label{3-1}
    u(x_1, x)\ge u(-x_1, x)\ \ \ \text{for all}\ x=(x_1, x_2)\in \R^2 \backslash \overline{B_R},\ x_1>0.
  \end{equation}
   By using a rotation which replaces the $x_1$-axis of our coordinate frame by the $-x_1$-axis, we can then deduce equality in \eqref{3-1}, i.e., symmetry in the $x_1$-direction. The proof of Theorem \ref{th1} is thereby complete.

\subsection*{Acknowledgments}
 D. Cao and B. Fan were supported by National Key R \& D Program (2023YFA1010001) and NNSF of China (Grant No.\,12371212). W. Zhan was supported by NNSF of China (Grant No.\,12201525).

\subsection*{Data Availability} Data sharing is not applicable to this article as no datasets were generated or analyzed during the current study.

\subsection*{Declarations}

\smallskip
\ \ \\

\noindent \textbf{Conflict of interest} The authors declare that they have no conflict of interest.

	\phantom{s}
	\thispagestyle{empty}

\end{document}